\newtheorem{theorem}{Theorem}[section]
\newtheorem{lemma}[theorem]{Lemma}
\theoremstyle{definition}
\newtheorem{definition}[theorem]{Definition}
\theoremstyle{remark}
\newtheorem{remark}[theorem]{Remark}
\newcommand{\mysection}[1]{\section{#1}
\setcounter{equation}{0}}
\newcommand{\bR}{\mathbb R}
\newcommand\cP{\mathscr{P}}
\newcommand{\Div}{\operatorname{div}}
\def\dashint{\operatorname%
{\,\,\text{\bf--}\kern-.98em\DOTSI\intop\ilimits@\!\!}}
\renewcommand{\epsilon}{\varepsilon}
\begin{document}
\title [The aggregation equation with power-law kernels]{The aggregation equation with power-law kernels: ill-posedness, mass concentration and similarity solutions}

\author[H. Dong]{Hongjie Dong}
\address[H. Dong]{Division of Applied Mathematics, Brown University, 182 George Street, Box F, Providence, RI 02912, USA}
\email{Hongjie\_Dong@brown.edu}
\thanks{Hongjie Dong was partially supported by the National Science Foundation under agreement No. DMS-0800129.}

\date{\today}

\subjclass{35B40, 35K55, 92B05}

\keywords{aggregation equation, ill-poshness, instantaneous mass concentration, similarity solutions}

\begin{abstract}
We study the multidimensional aggregation equation $u_t+\Div(uv)=0$, $v=-\nabla K*u$ with initial data in $\cP_2(\bR^d)\cap L_{p}(\bR^d)$. We prove that with biological relevant potential $K(x)=|x|$, the equation is ill-posed in the critical Lebesgue space $L_{d/(d-1)}(\bR^d)$ in the sense that there exists initial data in $\cP_2(\bR^d)\cap L_{d/(d-1)}(\bR^d)$ such that the unique measure-valued solution leaves $L_{d/(d-1)}(\bR^d)$ immediately. We also extend this result to more general power-law kernels $K(x)=|x|^\alpha$, $0<\alpha<2$ for $p=p_s:=d/(d+\alpha-2)$, and prove a conjecture in \cite{BLR} about instantaneous mass concentration for initial data in $\cP_2(\bR^d)\cap L_{p}(\bR^d)$ with $p<p_s$. Finally, we characterize all the ``first kind'' radially symmetric similarity solutions in dimension greater than two.
\end{abstract}

\maketitle

\mysection{Introduction}

In this paper we consider the multidimensional aggregation equation
\begin{equation}
                            \label{Agg}
u_t+\Div(uv)=0,\quad v=-\nabla K*u
\end{equation}
for $x\in\bR^d$ and $t> 0$ with the initial data
\begin{equation*}
u(0,x)=u_0(x),\quad x\in\bR^d.
\end{equation*}
Here $d\ge 2$, $u\ge 0$, $K$ is the interaction potential, and $*$ denotes the spatial convolution. This equation arises in various models for biological aggregation and problems in granular media; see, for instance, \cite{ME,CMV,LT}. The problems of the well-posedness in different spaces, finite-time blowups, asymptotic behaviors of solutions of this equation, as well as the equation with an additional dissipation term, have been studied extensively by a number of authors; see \cite{LT, BV,BL,La,BCL,LR1,LR2,BLR,CFFLS} and reference therein. We refer the reader to \cite{BL2} for a nice review about recent progress on the aggregation equation.

In \cite{BLR} Bertozzi, Laurent and Rosado studied comprehensively the $L_p$ theory for the aggregation equation \eqref{Agg}. Among some other results, they considered radially symmetric kernels where the singularity at the origin is of order $|x|^\alpha$ for some $\alpha>2-d$, and proved the local well-posedness of \eqref{Agg} in $\cP_2(\bR^d)\cap L_p(\bR^d)$ for any $p>p_s$, where $p_s=d/(d+\alpha-2)$ (see below for the definition of the space $\cP_2(\bR^d)$). In the biological relevant case $K(x)=|x|$, they showed that solutions can concentrate mass instantaneously for initial data in $\cP_2(\bR^d)\cap L_p(\bR^d)$ for any $p<p_s$.
It remains unknown if \eqref{Agg} is well-posed in the critical space $\cP_2(\bR^d)\cap L_{p_s}(\bR^d)$. Another interesting open question is whether one can show a similar instantaneous mass concentration phenomenon for the equation with general power-law potential $|x|^\alpha$. The authors conjectured in \cite{BLR} that the answer to the second question is positive.

The aim of the current paper is to answer these questions. For the first question, we shall construct radially symmetric initial data in $\cP_2(\bR^d)\cap L_{d/(d-1)}(\bR^d)$, such that the unique measured-valued solution leaves $L_{d/(d-1)}(\bR^d)$ immediately for $t>0$; see Theorem \ref{thm1}. This result implies that \eqref{Agg} is ill-posed in $\cP_2(\bR^d)\cap L_{d/(d-1)}(\bR^d)$, and the well-posedness result for $p>d/(d-1)$ obtained in \cite{BLR} is sharp. For the second question, we show that, for any $\alpha\in (0,2)$ and any $p<p_s$, there exists radially symmetric initial data in $\cP_2(\bR^d)\cap L_p(\bR^d)$ such that the solution concentrates mass at the origin instantaneously. In other words, a Dirac delta  appears immediately in the solution. Therefore, we settle down the aforementioned conjecture in \cite{BLR}. We also prove that, for any $\alpha\in (0,2)$, \eqref{Agg} is ill-posed in $\cP_2(\bR^d)\cap L_{p_s}(\bR^d)$ by constructing initial data in $\cP_2(\bR^d)\cap L_{p_s}(\bR^d)$ such that any weakly continuous measured-valued solution, if exists, leaves $L_{p_s}(\bR^d)$ immediately for $t>0$.

The proofs use some ideas in \cite{BLR} by considering the flow map driven by the velocity field $v$. Roughly speaking, there are two steps in the proofs. In the first step, we find a suitable representation of the velocity field $v$ in the polar coordinates, and prove the monotonicity, positivity and asymptotics of the corresponding kernel. For $K(x)=|x|$, these have already been established in \cite{BLR} (Lemma \ref{lem2.3}). More delicate analysis is needed for general power-law potential $K(x)=|x|^\alpha$ (see Lemma \ref{lem3.3}). In the second step, we deduce certain positive lower bounds for the velocity (Lemmas \ref{lem2.5}, \ref{lem3.5} and \ref{lem4.5}). Combined with the monotonicity of the velocity in time, we then reduce the problems to study the dynamics of solutions to some ordinary differential equations. In the case $p<p_s$, it is shown that the flow map reaches the origin in a short time, which generates a Dirac delta. While in the critical case $p=p_s$, a Dirac delta may not develop shortly, but the flow map makes the mass concentrate quickly enough near the origin such that the solution $u$ leaves $L_{p_s}(\bR^d)$ immediately.

We also consider profiles of similarity solutions to \eqref{Agg} at the blowup time with the potential $K(x)=|x|$, which conserve mass. This type of solutions is an example of  ``first-kind'' similarity solutions; see \cite{Ba}. In \cite{BCL}, Bertozzi, Carrillo and Laurent constructed  radially symmetric first-kind similarity solutions in the dimension one and two, and proved that in any odd dimension $d\ge 3$  such solutions cannot exist with support on open sets. By observing certain concavity property of the kernel in the polar coordinates, in Section \ref{sec3} we characterize all the radially symmetric first-kind similarity solutions in the dimension $d\ge 3$.

We finish the Introduction by fixing some notation. Most notation in this paper are chosen to be compatible with those in \cite{BLR}. For $r>0$, let
$$
B_r=\{x\in \bR^d:|x|<r\},\quad S_r=\{x\in \bR^d:|x|=r\}.
$$
By $\omega_d$ we mean the surface area of the unit sphere $S_1$ in $\bR^d$. We denote $\cP(\bR^d)$ to be the set of all probability measures on $\bR^d$, and $\cP_2(\bR^d)$ to be the set of all probability measures on $\bR^d$ with bounded second moment:
$$
\cP_2(\bR^d):=\left\{\mu\in \cP(\bR^d):\int_{\bR^d}|x|^2\,d\mu(x)<\infty\right\}.
$$

\mysection{Ill-posedness when $K(x)=|x|$}
                    \label{sec2}

In this section, we prove the following result, which reads that with potential function $K(x)=|x|$, the aggregation equation \eqref{Agg} is ill-posed in $\cP_2\cap L_{d/(d-1)}(\bR^d)$.

\begin{theorem}
                            \label{thm1}
Let $K(x)=|x|$, $k\in ((d-1)/d, 1)$ and
\begin{equation}
                                    \label{eq22.15}
u_0(x)=\frac L {|x|^{d-1}(-\log |x|)^{k}}1_{|x|\le 1/2}\in \cP_2(\bR^d)\cap L_{d/(d-1)}(\bR^d),
\end{equation}
where
$$
L=\int_{|x|\le 1/2}|x|^{-d+1}(-\log |x|)^{-k}\,dx
$$
is a normalization constant. Let $(\mu_t)_{t\in (0,\infty)}$ be the unique measure-valued solution to the aggregation equation \eqref{Agg}. Then for any $t>0$ the density of $\mu_t$, if exists, is not in $L_{d/(d-1)}(\bR^d)$.
\end{theorem}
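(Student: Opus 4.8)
The plan is to prove the purely measure-theoretic statement that for every $t>0$ we have $\limsup_{\rho\to 0^+}\mu_t(B_\rho)/\rho=+\infty$, which gives the theorem: were $\mu_t$ to have a density $u(t,\cdot)\in L_{d/(d-1)}(\bR^d)$, then H\"older's inequality with conjugate exponents $d/(d-1)$ and $d$ would yield $\mu_t(B_\rho)\le|B_\rho|^{1/d}\,\|u(t,\cdot)\|_{L_{d/(d-1)}(\bR^d)}=(\omega_d/d)^{1/d}\,\rho\,\|u(t,\cdot)\|_{L_{d/(d-1)}(\bR^d)}$ for all $\rho>0$, contradicting that divergence. (Membership $u_0\in\cP_2(\bR^d)\cap L_{d/(d-1)}(\bR^d)$ in \eqref{eq22.15} is a short polar computation, the $L_{d/(d-1)}$ part amounting to convergence of $\int_0^{1/2}\rho^{-1}(-\log\rho)^{-kd/(d-1)}\,d\rho$, which holds precisely because $k>(d-1)/d$; finiteness of $L$ and of $\int_{\bR^d}|x|^2u_0\,dx$ is immediate since $u_0$ is a probability density supported in $B_{1/2}$.)

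Following \cite{BLR}, since $u_0$ is radially symmetric, the velocity $v=-\nabla K*u$ stays radial and inward pointing, the solution is transported by the flow $X_t$ of $v$ (so $\mu_t=(X_t)_\#\mu_0$), and $X_t$ is radial and radially monotone. Hence mass in balls is transported and conserved: writing $m_0(r):=\mu_0(B_r)$ and, for $|x|=r_0$, $r(t):=|X_t(x)|$, one has $\mu_t(B_{r(t)})=m_0(r_0)$ and $\dot r(t)=-\bar v(r(t),t)$, where $\bar v(\rho,s)\ge 0$ denotes the inward radial speed at radius $\rho$ at time $s$. Since every characteristic moves inward, $B_\rho$ only gains mass, so $s\mapsto\mu_s(B_\rho)$ is nondecreasing for each fixed $\rho$; equivalently, the radial distribution of $\mu_s$ is obtained from that of $\mu_0$ by moving mass monotonically toward the origin.

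The heart of the argument, and the step I expect to be the main obstacle, is a lower bound for $\bar v$ much stronger than the naive guess $\bar v(r,s)\approx\mu_s(B_r)$. By Lemma \ref{lem2.3} (due to \cite{BLR}), $\bar v(r,s)$ is the integral, against the radial distribution of $\mu_s$, of a kernel $\Phi(r,\rho)$ that is nonnegative, nonincreasing in $\rho$, and satisfies $\Phi(r,\rho)\ge c\,r/\rho$ for $\rho\ge r$; since $\Phi(r,\cdot)$ is nonincreasing and the flow pushes mass toward the origin, $\bar v(r,s)\ge\bar v(r,0)$ for all $s\ge 0$, so it suffices to bound $\bar v(r,0)$ below. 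For the $u_0$ in \eqref{eq22.15} one has $m_0(\rho)\ge c_1\,\rho(-\log\rho)^{-k}$ for small $\rho$, and the delicate point is that the dominant contribution to $\bar v(r,0)$ comes not from the mass inside $B_r$ (of size only $r(-\log r)^{-k}$) but from the comparatively heavy shells just outside $B_r$:
\[
\bar v(r,0)\ \ge\ c\,r\int_r^{1/2}\frac{d\rho}{\rho(-\log\rho)^{k}}\ \ge\ \frac{c'}{1-k}\,r\,(-\log r)^{1-k},
\]
where $1-k>0$ is exactly the exponent that makes this far larger than $m_0(r)$. Hence (Lemma \ref{lem2.5}) there are $c_0>0$ and $r_*\in(0,1/2)$ with $\bar v(r,s)\ge c_0\,r(-\log r)^{1-k}$ for all $0<r\le r_*$ and $s\ge 0$. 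Pinning down the kernel monotonicity and $r/\rho$ decay precisely enough to run this, and handling the measure-valued setting throughout, is the real work; what follows is routine.

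It remains to insert this bound into the flow ODE. Fix $t>0$ and take $r_0\le r_*$; then $r(\cdot)$ stays in $(0,r_*]$ and $\dot r\le -c_0\,r(-\log r)^{1-k}$, so $r(t)\le Y(t)$, where $\dot Y=-c_0\,Y(-\log Y)^{1-k}$, $Y(0)=r_0$. Writing $\sigma(\tau):=-\log Y(\tau)$ gives $\dot\sigma=c_0\,\sigma^{1-k}$, hence $\sigma(t)^k=(-\log r_0)^k+c_0kt$ and
\[
r(t)\ \le\ Y(t)\ =\ \exp\!\Big(-\big[(-\log r_0)^k+c_0kt\big]^{1/k}\Big).
\]
By conservation of mass in balls and monotonicity of $\rho\mapsto\mu_t(B_\rho)$, $\mu_t(B_{Y(t)})\ge\mu_t(B_{r(t)})=m_0(r_0)\ge c_1\,r_0(-\log r_0)^{-k}$, so with $\rho:=Y(t)\to 0^+$ and $S:=-\log r_0\to\infty$ as $r_0\to 0^+$,
\[
\frac{\mu_t(B_\rho)}{\rho}\ \ge\ \frac{c_1\,r_0\,S^{-k}}{\exp\!\big(-(S^k+c_0kt)^{1/k}\big)}\ =\ c_1\,S^{-k}\exp\!\Big((S^k+c_0kt)^{1/k}-S\Big).
\]
By convexity of $x\mapsto x^{1/k}$ (here $k<1$), $(S^k+c_0kt)^{1/k}-S\ge c_0t\,S^{1-k}$, which tends to $+\infty$ as $S\to\infty$ for fixed $t>0$; hence the right-hand side tends to $+\infty$, proving the claimed divergence and therefore the theorem. (Should some characteristic starting at $r_0\le r_*$ reach the origin by time $t$, then $\mu_t$ carries an atom of mass $m_0(r_0)>0$ at the origin and the conclusion holds a fortiori.)
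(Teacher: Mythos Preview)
Your proposal is correct and follows essentially the same route as the paper: the velocity lower bound $\bar v(r,0)\ge c_0\,r(-\log r)^{1-k}$ is exactly Lemma~\ref{lem2.5}, the monotonicity $\bar v(r,s)\ge\bar v(r,0)$ is Lemma~\ref{lem2.4}, and the resulting ODE inequality integrates to the same relation $(-\log R_t(r))^k\ge(-\log r)^k+c_0kt$ as the paper's \eqref{eq23.18}. The only difference is cosmetic: where the paper passes through a Taylor approximation to reach \eqref{eq23.27} and then closes with L'Hospital, you keep the exact bound and use convexity of $x\mapsto x^{1/k}$ to obtain $(S^k+c_0kt)^{1/k}-S\ge c_0t\,S^{1-k}$ directly, which is slightly cleaner but amounts to the same computation.
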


We note that for $K(x)=|x|^\alpha,\alpha\ge 1$ the global existence and uniqueness of a weakly continuous measure-valued solution was proved in \cite{CFFLS}. Moreover, for $\alpha<2$, any measure-valued  solution will eventually collapse to a Dirac delta at the center of mass in a finite time. In \cite{BLR}, it was also proved that if $u_0\in \cP_2$ then the measure-valued solution stays in $\cP_2$.

\begin{definition}
Let $\mu\in \cP(\bR^d)$ be a radially symmetric probability measure. We define $\hat \mu\in \cP([0,+\infty))$ by
$$
\hat \mu(I)=\mu(\{x\in \bR^d: |x|\in I\})
$$
for all Borel sets $I$ in $[0,+\infty))$.
\end{definition}

We reformulate some results in \cite[Sect. 4]{BLR} as the following lemmas.

\begin{lemma}
                        \label{lem2.3}
Let $K(x)=|x|$, $\mu\in \cP(\bR^d)$ be a radially symmetric measure. Then for any $x\neq 0$, we have
$$
(\nabla K*\mu)(x)=\int_0^\infty \psi(\rho/|x|)\,d\hat \mu(\rho) \frac x {|x|},
$$
where $\psi:[0,\infty)\to \bR$ is a function  defined by
$$
\psi(\rho)=\dashint_{S_1}\frac {1-\rho y_1} {|e_1-\rho y|}\,d\sigma(y).
$$
Moreover, $\psi$ is continuous, positive, non-increasing on $[0,\infty)$, and
$$
\psi(0)=1,\quad \lim_{\rho\to \infty}\psi(\rho)\rho=\frac {d-1} d.
$$
\end{lemma}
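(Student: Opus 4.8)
The plan is to establish the three assertions separately: the integral representation of $\nabla K*\mu$, then the value $\psi(0)=1$ together with the continuity of $\psi$, and finally the monotonicity of $\psi$ and its asymptotics at infinity, after which positivity needs no extra work.

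For the representation I would start from $\nabla K(x)=x/|x|$. Since $\mu$ is a radially symmetric probability measure it carries no atom away from the origin, so for $x\neq0$ the integral $(\nabla K*\mu)(x)=\int_{\bR^d}\frac{x-y}{|x-y|}\,d\mu(y)$ is well defined and, as $|\nabla K|\le1$, absolutely convergent. Disintegrating $\mu$ over the spheres $S_\rho$ --- the conditional measure on each $S_\rho$ being rotation invariant, hence uniform --- gives $\int f\,d\mu=\int_0^\infty\dashint_{S_1}f(\rho z)\,d\sigma(z)\,d\hat\mu(\rho)$, so
\[
(\nabla K*\mu)(x)=\int_0^\infty\Big(\dashint_{S_1}\frac{x-\rho z}{|x-\rho z|}\,d\sigma(z)\Big)\,d\hat\mu(\rho).
\]
The inner average is invariant under the orthogonal reflection of $\bR^d$ that fixes the line through $x$, hence it is a scalar multiple of $x$; computing its $x/|x|$-component at $x=re_1$, where pulling the factor $r$ out of $r-\rho z_1=r(1-(\rho/r)z_1)$ and of $|re_1-\rho z|=r\,|e_1-(\rho/r)z|$ identifies it with $\psi(\rho/r)$, we find that the inner average equals $\psi(\rho/|x|)\,x/|x|$, which yields the claimed formula.

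For the properties of $\psi$ I would first note that, by Cauchy--Schwarz, $|1-\rho y_1|=|e_1\cdot(e_1-\rho y)|\le|e_1-\rho y|$, so the integrand defining $\psi$ is bounded by $1$ in absolute value; since it is continuous in $\rho\ge0$ for each fixed $y\neq e_1$ (when $y_1^2<1$ the quadratic $1-2\rho y_1+\rho^2$ has no real root), bounded convergence gives continuity of $\psi$ on $[0,\infty)$, and at $\rho=0$ it gives $\psi(0)=\dashint_{S_1}1\,d\sigma=1$. For the monotonicity I would differentiate under the integral on each of the intervals $(0,1)$ and $(1,\infty)$, where $|e_1-\rho y|$ is bounded below uniformly in $y$ on compact $\rho$-subintervals; the pointwise derivative is $\partial_\rho\frac{1-\rho y_1}{|e_1-\rho y|}=\rho(y_1^2-1)\,|e_1-\rho y|^{-3}\le0$, so $\psi$ is non-increasing on $[0,1)$ and on $(1,\infty)$, hence --- being continuous at $\rho=1$ --- on all of $[0,\infty)$. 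For the asymptotics I would write $|e_1-\rho y|=\rho(1-2y_1/\rho+1/\rho^2)^{1/2}$, so that $\rho\,\psi(\rho)=\dashint_{S_1}(1-\rho y_1)(1-2y_1/\rho+1/\rho^2)^{-1/2}\,d\sigma(y)$, and expand $(1-2y_1/\rho+1/\rho^2)^{-1/2}=1+y_1/\rho+O(1/\rho^2)$ uniformly in $y_1\in[-1,1]$; using $\dashint_{S_1}y_1\,d\sigma=0$ and $\dashint_{S_1}y_1^2\,d\sigma=1/d$, the terms linear in $y_1$ cancel and the remaining part tends to $\dashint_{S_1}(1-y_1^2)\,d\sigma=(d-1)/d$. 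Positivity is then automatic: a non-increasing $\psi$ with $\rho\,\psi(\rho)\to(d-1)/d>0$ cannot satisfy $\psi(\rho_0)\le0$ at any $\rho_0$, since that would force $\psi\le0$ on $[\rho_0,\infty)$ and hence $\rho\,\psi(\rho)\not\to(d-1)/d$.

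The only step needing real care is the integrable singularity of the integrand at $\rho=1$, $y=e_1$; it is the reason the monotonicity is obtained on $(0,1)$ and $(1,\infty)$ separately and then extended across $\rho=1$ by continuity, rather than by one global differentiation under the integral sign. Everything else is routine.
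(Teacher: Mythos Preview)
Your argument is correct. The paper itself does not prove Lemma~2.3 (it is quoted from \cite{BLR}); the closest thing to a proof in the paper is the proof of the generalization Lemma~3.3 for $K(x)=|x|^\alpha$, which specializes to the present statement when $\alpha=1$. Compared with that proof, your treatment of the monotonicity is more direct: you differentiate the integrand pointwise and obtain
\[
\partial_\rho\frac{1-\rho y_1}{|e_1-\rho y|}=\frac{\rho(y_1^2-1)}{|e_1-\rho y|^3}\le 0,
\]
whereas the paper passes to the angular variable $\theta$, writes $\psi'=\tfrac{\omega_{d-1}}{\omega_d}(I_1+I_2)$, and integrates $I_1$ by parts before the sign becomes visible. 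Your route is cleaner here precisely because at $\alpha=1$ the cross terms collapse to $\rho(y_1^2-1)$; for general $\alpha$ the paper's integration by parts is what makes the sign of $\psi'$ apparent. For the asymptotics you Taylor-expand $(1-2y_1/\rho+1/\rho^2)^{-1/2}$ and use the moments $\dashint_{S_1}y_1\,d\sigma=0$ and $\dashint_{S_1}y_1^2\,d\sigma=1/d$, while the paper stays in angular coordinates and invokes the Wallis-type identity $\int_0^\pi(\sin\theta)^d\,d\theta=\tfrac{d-1}{d}\int_0^\pi(\sin\theta)^{d-2}\,d\theta$; both routes give $(d-1)/d$. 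Your positivity argument (non-increasing with positive limit of $\rho\psi(\rho)$) is the same as the paper's. One small remark: the aside that a radially symmetric probability measure ``carries no atom away from the origin'' uses $d\ge 2$, which the paper assumes globally; and in any case it is not needed here, since $|\nabla K|\le 1$ already makes the convolution integral absolutely convergent.
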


\begin{lemma}
                        \label{lem2.4}
Let $K(x)=|x|$, and $(\mu_t)_{t\in [0,\infty)}$ be a radially symmetric weakly continuous measured-valued solution of the aggregation equation \eqref{Agg}. Then the vector field
$$
v(t,x)=-(\nabla K*\mu_t)1_{x\neq 0}
$$
is continuous on $[0,\infty)\times (\bR^d\setminus \{0\})$. For any $t\ge 0$, we have
$$
\mu_t=X_t^\#\mu_0,
$$
where, for each $x\in \bR^d$, $X_t=X_t(x)$ is an absolutely continuous function on $[0,\infty)$ satisfying
\begin{equation*}
\left\{
  \begin{aligned}
    \frac d {dt} X_t(x)&= v(t,X_t(x)) \quad\hbox{for a.e. $t\in (0,\infty)$;} \\
    X_0(x)&=x,
  \end{aligned}
\right.
\end{equation*}
and $X_t^\#$ means the push-forward of a measure by the map $X_t$.
Moreover, for any $x\neq 0$, $X_t(x)=R_t(|x|)x/|x|$, where $R_t$ is an absolutely continuous, non-negative and non-increasing function in $t\in [0,\infty)$, and
$$
\hat \mu_t=R_t^\#\hat \mu_0.
$$
Consequently, for any $x\neq 0$,
$$
(\nabla K*\mu_t)(x)=\int_0^\infty \psi(R_t(\rho)/|x|)\,d\hat \mu_0(\rho) \frac x {|x|},
$$
and $|(\nabla K*\mu_t)(x)|$ is non-decreasing in $t$.
\end{lemma}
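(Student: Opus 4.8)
The plan is to import two external facts and then reorganize them in polar coordinates. The first is the existence/uniqueness theory for weakly continuous measure-valued solutions of \eqref{Agg} and their representation as push-forwards along the characteristic flow of $v$, which is the content of \cite{CFFLS} together with \cite[Sect. 4]{BLR}; the only point needing a word is that here the velocity is globally bounded, since by Lemma \ref{lem2.3} and $\psi\le\psi(0)=1$,
\[
|(\nabla K*\mu)(x)|=\int_0^\infty\psi(\rho/|x|)\,d\hat\mu(\rho)\le\psi(0)\,\mu(\bR^d)=1
\]
for every radially symmetric $\mu\in\cP(\bR^d)$ and every $x\ne0$. Hence $|v|\le1$ on $[0,\infty)\times\bR^d$ (with $v(t,0)=0$), so the characteristics $X_t(x)$ exist for all time and are $1$-Lipschitz in $t$, in particular absolutely continuous, and solve $\dot X_t=v(t,X_t)$ wherever $X_t\ne0$ and trivially where $X_t=0$. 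The second imported fact is Lemma \ref{lem2.3}, giving the polar form $v(t,x)=-g(t,|x|)\,x/|x|$ with $g(t,r):=\int_0^\infty\psi(\rho/r)\,d\hat\mu_t(\rho)$.

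First I would prove continuity of $v$. Since $x\mapsto x/|x|$ is continuous off the origin, it suffices to show $(t,r)\mapsto g(t,r)$ is continuous on $[0,\infty)\times(0,\infty)$. Two ingredients: (i) $t\mapsto\hat\mu_t$ is weakly continuous, being the push-forward of the weakly continuous $t\mapsto\mu_t$ under $x\mapsto|x|$; (ii) $\psi$ is uniformly continuous on $[0,\infty)$, because it is continuous, $\psi(0)=1$, and $\psi(\rho)\to0$ as $\rho\to\infty$ by the asymptotics in Lemma \ref{lem2.3}, so it extends continuously to the compact $[0,+\infty]$. From (ii), $\psi(\cdot/r_n)\to\psi(\cdot/r_0)$ uniformly on $[0,\infty)$ whenever $r_n\to r_0>0$ (on a bounded set of $\rho$ use uniform continuity of $\psi$ together with $|\rho/r_n-\rho/r_0|\to0$ uniformly there; outside it both values are small). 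Then for $(t_n,r_n)\to(t_0,r_0)$,
\[
|g(t_n,r_n)-g(t_0,r_0)|\le\sup_{\rho\ge0}\big|\psi(\rho/r_n)-\psi(\rho/r_0)\big|+\Big|\int_0^\infty\psi(\rho/r_0)\,d\hat\mu_{t_n}(\rho)-\int_0^\infty\psi(\rho/r_0)\,d\hat\mu_{t_0}(\rho)\Big|,
\]
the first term vanishing by the uniform convergence and the second by (i) since $\psi(\cdot/r_0)$ is bounded and continuous.

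Next I would establish the radial reduction. Because $\mu_t$ is radially symmetric (by hypothesis) and $g>0$ (positivity of $\psi$), the field $v(t,\cdot)$ points toward the origin along each ray, so a characteristic starting at $x\ne0$ remains on $\{sx/|x|:s\ge0\}$ and its radial coordinate $R_t=R_t(|x|)$ solves $\dot R_t=-g(t,R_t)$, $R_0=|x|$; thus $X_t(x)=R_t(|x|)\,x/|x|$. Since $g>0$, $R_t(r)$ is non-increasing in $t$; it is non-negative (once a characteristic reaches $0$ it stays, as $v(t,0)=0$) and $1$-Lipschitz in $t$, hence absolutely continuous. The representation $\mu_t=X_t^\#\mu_0$ from the imported theory, combined with this ray-invariance, then gives, for any Borel $I\subseteq[0,\infty)$,
\[
\hat\mu_t(I)=\mu_t(\{|x|\in I\})=\mu_0(\{x:|X_t(x)|\in I\})=\hat\mu_0(\{\rho:R_t(\rho)\in I\})=(R_t^\#\hat\mu_0)(I),
\]
i.e.\ $\hat\mu_t=R_t^\#\hat\mu_0$.

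Finally, substituting $\hat\mu_t=R_t^\#\hat\mu_0$ into the formula of Lemma \ref{lem2.3} and changing variables yields
\[
(\nabla K*\mu_t)(x)=\Big(\int_0^\infty\psi\big(R_t(\rho)/|x|\big)\,d\hat\mu_0(\rho)\Big)\frac{x}{|x|};
\]
and since $t\mapsto R_t(\rho)$ is non-increasing and $\psi$ is non-increasing, $t\mapsto\psi(R_t(\rho)/|x|)$ is non-decreasing, so $|(\nabla K*\mu_t)(x)|$ is non-decreasing in $t$. \textbf{The main obstacle} is the mild circularity in the first step: $X_t$ is defined through $v$, which depends on the unknown $\mu_t$, so $\mu_t=X_t^\#\mu_0$ is not automatic. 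The standard resolution, which I would follow, is to produce $\mu_t$ first by the fixed-point argument in Wasserstein distance of \cite{CFFLS}, then define the flow, then verify that the push-forward solves \eqref{Agg} and invoke uniqueness; the fact that $v$ is only continuous (not Lipschitz) in $x$, which would normally threaten uniqueness of characteristics, is harmless here because the dynamics collapses onto rays and the radial ODE for $R_t$ is one-dimensional.
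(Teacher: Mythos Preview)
The paper does not actually prove this lemma: it is introduced (together with Lemma~\ref{lem2.3}) as a ``reformulation of some results in \cite[Sect.~4]{BLR}'', and later, for the analogous Lemma~\ref{lem3.4}, the paper again writes ``We omit the details and refer the reader to \cite[Sect.~4]{BLR}.'' Your proposal is a correct fleshing-out of precisely that imported argument---boundedness of $v$ from $\psi\le1$, continuity via weak continuity of $\hat\mu_t$ and uniform continuity of $\psi$, radial reduction of the flow, and monotonicity of $|(\nabla K*\mu_t)(x)|$ from the monotonicity of $\psi$ and of $R_t$---so there is nothing to contrast.

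One small caveat: your closing remark that non-Lipschitz $v$ is ``harmless\dots because the radial ODE for $R_t$ is one-dimensional'' is not by itself a justification (one-dimensional continuous ODEs need not have unique solutions). Your actual fix---construct $\mu_t$ first via \cite{CFFLS}, build a flow, verify the push-forward solves \eqref{Agg}, then invoke uniqueness of the \emph{measure-valued solution}---is the right one and does not rely on uniqueness of characteristics; I would drop the one-dimensionality comment.
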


Next we establish a point-wise lower bound of the velocity $v$ at $t=0$.

\begin{lemma}
                                \label{lem2.5}
Let $K(x)=|x|$ and $u_0$ be the initial data defined in \eqref{eq22.15} with $k\in ((d-1)/d,1)$:
$$
u_0(x)=\frac L {|x|^{d-1}(-\log |x|)^{k}}1_{|x|\le 1/2}.
$$
Then there exists a constant $\delta_1>0$ such that
\begin{equation}
                                    \label{eq22.22}
|(\nabla K*u_0)(x)|\ge \delta_1 |x|(-\log |x|)^{1-k}
\end{equation}
for any $x\in \bR^d$ satisfying $0<|x|<1/2$.
\end{lemma}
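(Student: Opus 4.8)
The plan is to combine the polar representation of the velocity from Lemma~\ref{lem2.3} with the two extreme features of the kernel $\psi$: its positivity near the origin and, decisively, its $t^{-1}$ decay at infinity. By Lemma~\ref{lem2.3} applied to $\mu=u_0\,dx$,
\[
|(\nabla K*u_0)(x)|=\int_0^\infty \psi\!\left(\frac{\rho}{|x|}\right)d\hat\mu_0(\rho),
\]
and passing to polar coordinates shows that on $[0,1/2]$ the measure $\hat\mu_0$ has density $c\,(-\log\rho)^{-k}$ for some constant $c=c(d,k,L)>0$, while $\hat\mu_0$ is a probability measure supported in $[0,1/2]$. The naive estimate, keeping only $\rho\le|x|$ and bounding $\psi(\rho/|x|)\ge\psi(1)>0$, produces only a lower bound of order $|x|(-\log|x|)^{-k}$; this falls short of \eqref{eq22.22} by exactly one power of $-\log|x|$, so the missing logarithm has to be extracted from the mass located at scales $\rho$ much larger than $|x|$.

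To do this I would restrict the integral to the range $\rho\in[|x|,1/2]$. By Lemma~\ref{lem2.3} the map $t\mapsto t\psi(t)$ is continuous and strictly positive on $[1,\infty)$ with $\lim_{t\to\infty}t\psi(t)=(d-1)/d>0$, hence $c_1:=\inf_{t\ge1}t\psi(t)>0$ and $\psi(\rho/|x|)\ge c_1|x|/\rho$ for all $\rho\ge|x|$. Therefore, for $0<|x|<1/2$,
\[
|(\nabla K*u_0)(x)|\ \ge\ c_1 c\,|x|\int_{|x|}^{1/2}\frac{d\rho}{\rho(-\log\rho)^{k}}\ =\ \frac{c_1 c}{1-k}\,|x|\Big((-\log|x|)^{1-k}-(\log 2)^{1-k}\Big),
\]
the last integral being evaluated by the substitution $s=-\log\rho$ and using $k<1$. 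This already has the correct order as $|x|\to0$.

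It remains to upgrade this to \eqref{eq22.22} uniformly on $0<|x|<1/2$, which I would do by splitting the range. Choosing $r_0\in(0,1/2)$ small enough that $(-\log r)^{1-k}-(\log 2)^{1-k}\ge\tfrac{1}{2}(-\log r)^{1-k}$ for $0<r\le r_0$, the displayed bound gives \eqref{eq22.22} on $0<|x|\le r_0$ with $\delta_1=\tfrac{c_1c}{2(1-k)}$. On the compact range $r_0\le|x|<1/2$, since $\psi$ is non-increasing and $\hat\mu_0$ is a probability measure supported in $[0,1/2]$, we get $|(\nabla K*u_0)(x)|\ge\psi\big(1/(2|x|)\big)\ge\min_{[1,\,1/(2r_0)]}\psi>0$, while $|x|(-\log|x|)^{1-k}$ is bounded above there; shrinking $\delta_1$ if necessary finishes the proof.

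The only conceptually delicate point is the one already highlighted: the exponent $1-k$ on the right-hand side of \eqref{eq22.22} is \emph{not} obtained from the mass inside $B_{|x|}$, but is forced by the interplay between the near-critical profile of $u_0$ (chosen to be borderline in $L_{d/(d-1)}$) and the slow $t^{-1}$ tail of $\psi$; once this mechanism is identified, everything reduces to the elementary integral $\int_{|x|}^{1/2}\rho^{-1}(-\log\rho)^{-k}\,d\rho$ and a compactness argument on $[r_0,1/2)$.
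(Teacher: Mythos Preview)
Your proof is correct and follows essentially the same approach as the paper: restrict to $\rho\in[|x|,1/2]$, use the lower bound $\psi(\rho/|x|)\ge c_1|x|/\rho$ coming from the positivity and $t^{-1}$ tail of $\psi$ in Lemma~\ref{lem2.3}, evaluate the resulting integral $\int_{|x|}^{1/2}\rho^{-1}(-\log\rho)^{-k}\,d\rho$, and handle the remaining compact range by a positivity/continuity argument. The only cosmetic difference is that the paper fixes the cutoff at $|x|=1/4$ from the outset (invoking positivity and continuity of $|v(0,\cdot)|$ on the closed annulus), whereas you choose $r_0$ a posteriori and make the compactness step slightly more explicit via the bound $|(\nabla K*u_0)(x)|\ge\psi(1/(2|x|))$.
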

\begin{proof}
Clearly, we have
$$
\hat u_0(\rho)=L\omega_d(-\log \rho)^{-k}1_{\rho\le 1/2}.
$$
By the positivity and continuity  of $|v(0,\cdot)|$ in $\bR^d\setminus \{0\}$, it suffices to prove \eqref{eq22.22} for $x\in \bR^d$ satisfying $0<|x|<1/4$. It follows from Lemma \ref{lem2.3} that for any $\rho\in (|x|,1/2)$,
$$
\psi(\rho/|x|)\ge \delta_0 |x|/\rho
$$
for a constant $\delta_0>0$ independent of $|x|$.
Thus, for any $x\in \bR^d$ satisfying $0<|x|<1/4$,
\begin{align*}
|(\nabla K*u_0)(x)|&\ge \int_{|x|}^{1/2}\psi(\rho/|x|)\hat u_0(\rho)\,d\rho\\
&\ge \delta_0L\omega_d|x|\int_{|x|}^{1/2}(-\log \rho)^{-k}\,\frac {d\rho}{\rho}\\
&=\frac {\delta_0L\omega_d} {1-k}|x|\left((-\log |x|)^{1-k}-(\log 2)^{1-k}\right)\\
&\ge \delta_1 |x|(-\log |x|)^{1-k},
\end{align*}
since $0<|x|<1/4$. The lower bound \eqref{eq22.22} is proved.
\end{proof}

We are now ready to prove Theorem \ref{thm1}.
\begin{proof}[Proof of Theorem \ref{thm1}]
By Lemma \ref{lem2.4}, for each $x\in \bR^d$, $|v(t,x)|$ is non-decreasing in $t$. Therefore, from the lower bound of $|v(x,0)|$ \eqref{eq22.22} we infer that for any $r\in (0,1/2)$,
\begin{equation}
                            \label{eq23.09}
\frac d {dt} R_t(r)\le -\delta_1 R_t(r)(-\log R_t(r))^{1-k}.
\end{equation}
Solving the ordinary differential inequality \eqref{eq23.09} gives
\begin{equation}
                                    \label{eq23.18}
(-\log R_t(r))^k\ge (-\log r)^k+k\delta_1 t.
\end{equation}
From \eqref{eq23.18}, we obtain
\begin{equation*}
\frac {(-\log R_t(r))^k} {(-\log r)^k}\ge 1 +\frac {k\delta_1 t} {(-\log r)^k},
\end{equation*}
and by Taylor's formula, for $t$ sufficiently small,
$$
\frac {(-\log R_t(r))} {(-\log r)}\ge 1 +\frac {\delta_2 t} {(-\log r)^k},
$$
where $\delta_2=\delta_1/2$.
We thus obtain
\begin{equation}
                                        \label{eq23.27}
R_t(r)\le r e^{-\delta_2 t (-\log r)^{1-k}}.
\end{equation}
Now we suppose that, for some $t>0$, $\mu_t$ has a density function $u(t,x)\in L_{d/(d-1)}(\bR^d)$. By H\"older's inequality, for any $r\in (0,1/2)$
\begin{align}
\mu_t(B_{R_t(r)})&=\int_{B_{R_t(r)}}u(t,x)\,dx\nonumber\\
&\le \left(\int_{B_{R_t(r)}}u^{\frac d {d-1}}(t,x)\,dx\right)^{\frac {d-1}d }|B_{R_t(r)}|^{\frac 1 d }\nonumber\\
&\le \|u(t,\cdot)\|_{L_{d/(d-1)}(\bR^d)}R_t(r).
                                    \label{eq23.42}
\end{align}
On the other hand, by the definitions of $\mu_t$ and $R_t$,
\begin{equation}
                                \label{eq23.49}
\mu_t(B_{R_t(r)})\ge \mu_0(B_r)=L\omega_d\int_0^r (-\log\rho)^{-k}\,d\rho.
\end{equation}
Note that the above inequality is strict only when there is a mass concentration before time $t$.
We combine \eqref{eq23.42}, \eqref{eq23.49} and \eqref{eq23.27} to get
\begin{align}
\|u(t,\cdot)\|_{L_{d/(d-1)}(\bR^d)}&\ge \frac {\mu_t(B_{R_t(r)})} {R_t(r)}\nonumber\\
&\ge L\omega_d\frac {\int_0^r (-\log\rho)^{-k}\,d\rho}{r e^{-\delta_2 t (-\log r)^{1-k}}}.
                                    \label{eq00.57}
\end{align}
However, by L'Hospital's rule,
\begin{align*}
&\lim_{r\searrow 0}\frac {\int_0^r (-\log\rho)^{-k}\,d\rho}{r e^{-\delta_2 t (-\log r)^{1-k}}}\\
&\,=\lim_{r\searrow 0}\frac {(-\log r)^{-k}}{e^{-\delta_2 t (-\log r)^{1-k}}\big(1+\delta_2 t(1-k)(-\log r)^{-k}\big)}\\
&\,=\infty.
\end{align*}
This gives a contradiction to \eqref{eq00.57} since we assume $u(t,\cdot)\in L_{d/(d-1)}(\bR^d)$. The theorem is proved.
\end{proof}

\mysection{Similarity solutions}
                        \label{sec3}
In this section, we consider the problem of similarity solutions to the aggregation equation.
This problem is closely related to the blowup profile for \eqref{Agg} at the blowup time.
Let us consider mass-conserving similarity solutions
\begin{equation}
                                    \label{eq00.25}
u(t,x)=\frac 1 {R(t)^d} u_0\left(\frac x {R(t)}\right)
\end{equation}
to the aggregation equation \eqref{Agg} with interaction kernel $K(x)=|x|$. These solutions are ``first-kind'' similarity solutions, while ``second-kind'' similarity solutions do not conserve mass. 

If $u$ is a radially symmetric first-kind similarity solution given by \eqref{eq00.25}, then by the homogeneity it is easily seen that
$$
v(t,x)=v_0\left(\frac x {R(t)}\right),\quad v_0=-\nabla |x|*u_0.
$$
Moreover, it was proved in  \cite{BCL} that $R(t)$ must be a linear function and on the support of $u_0$
\begin{equation}
                                    \label{eq16.08}
v_0=-\lambda x
\end{equation}
for some constant $\lambda>0$. In one space dimension, the authors of \cite{BCL} constructed a first-kind similarity solution
$$
u(t,x)=\frac 1 {T^*-t} U\left(\frac x {T^*-t}\right),
$$
where $U$ is the uniform distribution on $[-1,1]$. In any dimension, there is a first-kind similarity measure-valued solution which is a single delta on a sphere with radius shrinking linearly in time \cite[Remark 3.8]{BCL}. Such solution is called single delta-ring solution. In two space dimension, a two delta-ring solution was constructed in the same paper, i.e., $\hat\mu_0=m_1\delta_{\rho_1}+m_2\delta_{\rho_2}$ for some $0<\rho_1<\rho_2<\infty$ and $m_1,m_2>0$. On the other hand, for any odd dimension $d\ge 3$, by using a relation between $K(x)$ and the Newtonian potential they proved the non-existence of radially symmetric first-kind similarity solutions with support on open sets.  

We characterize all the radially symmetric first-kind similarity measure-valued solutions in the following theorem, which in particular implies that in dimension three and higher there cannot exist first-kind similarity solutions with support on open sets or multi delta rings.

\begin{theorem}[Characterization of similarity solutions]
                                    \label{thm3.1}
Let $d\ge 3$ and $K(x)=|x|$. Then any radially symmetric first-kind similarity measure-valued solution is of the form
$$
\mu_t(x)=\frac 1 {R(t)^d} \mu_0\left(\frac x {R(t)}\right),
$$
where
\begin{equation}
                                    \label{eq10.40}
\hat \mu_0=m_0\delta_0+m_1\delta_{\rho_1}
\end{equation}
for some constants $m_0,m_1\ge 0$ and $\rho_1>0$.
\end{theorem}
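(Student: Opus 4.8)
The plan is to reduce the statement to a rigidity property of a single one–variable kernel. By Lemma~\ref{lem2.4} together with the facts recalled before the theorem (from \cite{BCL}), any radially symmetric first-kind similarity measure-valued solution satisfies $(\nabla K*\mu_0)(x)=\lambda x$ on $\operatorname{supp}\mu_0$ for some constant $\lambda>0$; inserting this into the representation of Lemma~\ref{lem2.3} gives
\begin{equation*}
F(r):=\int_0^\infty\psi(\rho/r)\,d\hat\mu_0(\rho)=\lambda r\qquad\text{for all }r\in\operatorname{supp}\hat\mu_0\cap(0,\infty).
\end{equation*}
As $0\le\psi\le1$ we have $F\le1$, so $\operatorname{supp}\hat\mu_0\cap(0,\infty)\subseteq(0,1/\lambda]$ is bounded. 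Put $w(s):=\dashint_{S_1}|e_1-sy|\,d\sigma(y)$. A short computation (using the homogeneity and $|te_1-y|=|e_1-ty|$) shows $\psi(t)=w'(1/t)$, so the natural ``kernel in polar coordinates'' is
\begin{equation*}
g(t):=t\,\psi(t)=t\,w'(1/t),\qquad g(0)=0,\qquad \lim_{t\to\infty}g(t)=\frac{d-1}{d},
\end{equation*}
the limit being the asymptotics of Lemma~\ref{lem2.3}.

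The key analytic input is the concavity property: \emph{for $d\ge 3$ the function $g$ is concave on $[0,\infty)$, and $g''(t)<0$ for $t\in(0,1)$.} Differentiating, $g''(t)=t^{-3}w'''(1/t)$ and
\begin{equation*}
w'''(s)=-3\dashint_{S_1}\frac{(1-y_1^2)(s-y_1)}{|e_1-sy|^{5}}\,d\sigma(y).
\end{equation*}
For $s\ge1$ one has $s-y_1\ge0$ on $S_1$, hence $w'''(s)\le0$, strictly for $s>1$; this already yields $g''\le0$ on $(0,1]$ and $g''<0$ on $(0,1)$. The remaining assertion, $w'''\le0$ for $s<1$, is the delicate point, and I expect it to be the main obstacle: here $s-y_1$ changes sign, and the hypothesis $d\ge3$ is essential — for $d=2$ one finds $w'''(s)>0$ near $s=0$, consistent with the two delta–ring similarity solutions in \cite{BCL}. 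I would establish it by reducing the $S_1$–average to a one–dimensional integral against $(1-c^2)^{(d-3)/2}\,dc$ and either expanding $(1-2sc+s^2)^{-3/2}$ in Gegenbauer polynomials or carrying out a direct sign analysis of the $c\mapsto-c$ symmetrized integrand. Granting this property, since $g$ is continuous and concave with $g(0)=0<\lim_{t\to\infty}g(t)$, it is automatically non-decreasing on $[0,\infty)$ (a concave function with a point of negative slope would tend to $-\infty$).

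Given these facts, the theorem follows quickly. Write $\hat\mu_0=m_0\delta_0+\nu$ with $m_0=\hat\mu_0(\{0\})\ge0$ and $\nu$ supported in $(0,\infty)$; if $\nu=0$ then $m_0=1$ and we are done, so assume $S:=\operatorname{supp}\nu\neq\emptyset$ and suppose, for contradiction, that $S$ contains two points $p<q$. Using $\psi(\rho/r)/r=g(\rho/r)/\rho$ and dividing $F(p)=\lambda p$ by $p$, $F(q)=\lambda q$ by $q$, and subtracting,
\begin{equation*}
m_0\Big(\frac1p-\frac1q\Big)+\int_{(0,\infty)}\frac{g(\rho/p)-g(\rho/q)}{\rho}\,d\nu(\rho)=0.
\end{equation*}
Since $p<q$ and $g$ is non-decreasing, $g(\rho/p)-g(\rho/q)\ge0$, so both terms vanish; hence $m_0=0$ and $g(\rho/p)=g(\rho/q)$ for every $\rho\in S$ (the set where this equality holds is closed and of full $\nu$–measure, so it contains $\operatorname{supp}\nu$). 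Taking $\rho=p$ gives $g(p/q)=g(1)$, and since $p/q<1$ and $g$ is non-decreasing, $g$ must be constant on $[p/q,1]$, contradicting $g''<0$ on $(0,1)$. Therefore $S=\{\rho_1\}$, so $\nu=m_1\delta_{\rho_1}$ and $\hat\mu_0=m_0\delta_0+m_1\delta_{\rho_1}$, which is the asserted form; the cases $m_1=0$ (point mass at the origin) and $m_0=0$ (single delta–ring) both occur.
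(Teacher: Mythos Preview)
Your approach is essentially the paper's. The paper works with $\phi(r):=\dashint_{S_1}\frac{r-y_1}{|re_1-y|}\,d\sigma(y)$, which satisfies $\phi(r)=\psi(1/r)$; hence your $g(t)=t\psi(t)$ obeys $g''(t)=t^{-3}\phi''(1/t)$, so concavity of $g$ and of $\phi$ are the same statement. Moreover the paper's chord inequality $\phi(\rho_1/\rho)\ge(\rho_1/\rho_2)\phi(\rho_2/\rho)$ is exactly the assertion that $r\mapsto\phi(r)/r=g(1/r)$ is non-increasing, i.e.\ that $g$ is non-decreasing. So your subtraction argument and the paper's direct comparison are two phrasings of the same contradiction (the paper's version is slightly slicker in that the $\delta_0$ part is absorbed automatically, since $\phi(\rho_k/0)=1$ and $1>(\rho_1/\rho_2)\cdot 1$).

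The real gap is exactly the step you flag: $w'''(s)\le0$ for $s<1$ (equivalently $\phi''(r)\le0$ for $r<1$). Your proposed Gegenbauer/symmetrization route is not what the paper does, and it is not obvious it goes through cleanly. The paper's device is an integration by parts: starting from your expression $\phi''(r)=-\tfrac{3\omega_{d-1}}{\omega_d}\int_0^\pi\frac{(\sin\theta)^d(r-\cos\theta)}{A^5}\,d\theta$, one replaces $\cos\theta$ by $\frac{1}{d+1}\bigl((\sin\theta)^{d+1}\bigr)'/(\sin\theta)^d$ and integrates by parts to obtain
\[
\phi''(r)=-\frac{3r\,\omega_{d-1}}{\omega_d}\int_0^\pi\frac{(\sin\theta)^d}{A^7}\Bigl(A^2-\tfrac{5}{d+1}\sin^2\theta\Bigr)\,d\theta.
\]
For $d\ge4$ the bracket is $\ge A^2-\sin^2\theta=(r-\cos\theta)^2\ge0$, giving $\phi''<0$ on $(0,\infty)$. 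For $d=3$ this estimate fails, and the paper instead derives (again by an integration by parts) the ODE $\phi(r)=r-\tfrac{r}{2}\phi'(r)$ on $[0,1]$, whose solution with $\phi(0)=0$ is $\phi(r)=2r/3$; thus $\phi''\equiv0$ on $(0,1)$ (and $\phi(r)=1-\tfrac{1}{3r^2}$ for $r>1$). This dimension split and the integration-by-parts identity are the missing ingredients in your outline.
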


For the proof, first we recall that by Lemma \ref{lem2.3} for any radially symmetric measure-valued solution $\mu_t$ and $t>0$,
\begin{equation}
                                            \label{eq16.39}
v(t,x)=-\int_0^\infty \phi(|x|/\rho)\,d\hat\mu_t(\rho)\frac {x} {|x|},                             \end{equation}
where
$\phi:[0,\infty)\to \bR$ is a function  defined by
\begin{equation}
                            \label{eq16.38}
\phi(r)=\dashint_{S_1}\frac {r-y_1} {|re_1-y|}\,d\sigma(y).
\end{equation}

The proof of Theorem \ref{thm3.1} relies on the following observation.

\begin{lemma}
                                \label{lem3.2}
i) Let $d\ge 4$ and $K(x)=|x|$. Then the function $\phi$ defined by \eqref{eq16.38} is $C^2$ on $[0,\infty)$ and satisfies
\begin{equation}
                                        \label{eq10.35}
\phi(0)=0,\quad \lim_{r\to \infty}\phi(r)=1,\quad \phi'(r)>0\,\,\text{on}\,\,[0,\infty),
\end{equation}
$$
\phi''(r)<0\,\,\text{on}\,\,(0,\infty).
$$

ii) Let $d=3$ and $K(x)=|x|$. Then the function $\phi$ defined by \eqref{eq16.38} is $C^1$ on $[0,\infty)$ and satisfies \eqref{eq10.35}. Moreover, it is concave on $[0,\infty)$, linear on $[0,1]$ and strictly concave on $(1,\infty)$.
\end{lemma}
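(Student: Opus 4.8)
The plan is to reduce $\phi$ to a one-dimensional integral and then analyze that integral directly; everything is then routine except the sign of $\phi''$. Since $S_1$ is rotationally symmetric about the $e_1$-axis and $|re_1-y|^2=r^2-2ry_1+1$ depends on $y$ only through $t:=y_1$, one has
\[
\phi(r)=c_d\int_{-1}^1\frac{r-t}{(r^2-2rt+1)^{1/2}}\,(1-t^2)^{(d-3)/2}\,dt,\qquad c_d=\frac{\omega_{d-1}}{\omega_d}.
\]
From this, $\phi(0)=0$ because the integrand is odd in $t$ at $r=0$, and $\phi(r)\to1$ as $r\to\infty$ because $(r-t)/(r^2-2rt+1)^{1/2}\to1$ uniformly for $t\in[-1,1]$. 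Writing $f(r)=(r^2-2rt+1)^{1/2}$ we have $(r-t)/(r^2-2rt+1)^{1/2}=f'(r)$, and a short induction gives $|f^{(j)}|\le C_j\,f^{1-j}$ with universal constants $C_j$; differentiating under the integral sign then yields
\[
\phi'(r)=c_d\int_{-1}^1\frac{(1-t^2)^{(d-1)/2}}{(r^2-2rt+1)^{3/2}}\,dt>0\qquad\text{on }[0,\infty),
\]
and more generally $\partial_r^k$ of the integrand of $\phi$ is $O\big((1-t^2)^{(d-3)/2}(r^2-2rt+1)^{-k/2}\big)$. Near the only singular point $(r,t)=(1,1)$ this is $O(s^{(d-3-k)/2})$ in $s:=1-t$, which has a majorant integrable uniformly for $r$ near $1$ exactly when $k\le d-2$. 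Hence $\phi\in C^{d-2}([0,\infty))$ and is $C^\infty$ away from $r=1$; in particular $\phi\in C^2$ for $d\ge4$ and $\phi\in C^1$ for $d=3$.

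The crux is the sign of
\[
\phi''(r)=-3c_d\int_{-1}^1\frac{(r-t)(1-t^2)^{(d-1)/2}}{(r^2-2rt+1)^{5/2}}\,dt=:-3c_d\,J(r).
\]
For $r\ge1$ the integrand is nonnegative (since $r-t\ge0$ on $[-1,1]$) and positive on $(-1,1)$, so $J(r)>0$ and $\phi''(r)<0$; this already covers $r>1$ when $d=3$. For $0<r<1$, where $r-t$ changes sign, I would use the Möbius substitution $t=\dfrac{s+r}{1+rs}$ (inverse $s=\dfrac{t-r}{1-rt}$), a diffeomorphism of $(-1,1)$ onto itself, which satisfies
\[
r-t=\frac{-(1-r^2)s}{1+rs},\quad 1-t^2=\frac{(1-r^2)(1-s^2)}{(1+rs)^2},\quad r^2-2rt+1=\frac{(1-r^2)(1-rs)}{1+rs},\quad dt=\frac{(1-r^2)\,ds}{(1+rs)^2}.
\]
Substituting, the powers of $1-r^2$ and of $1+rs$ collect cleanly and
\[
J(r)=-(1-r^2)^{(d-2)/2}\int_{-1}^1\frac{s\,(1-s^2)^{(d-1)/2}}{(1+rs)^{(2d-1)/2}(1-rs)^{5/2}}\,ds.
\]
Splitting at $s=0$ and folding $[-1,0]$ onto $[0,1]$ via $s\mapsto-s$ turns this into
\[
J(r)=-(1-r^2)^{(d-2)/2}\int_0^1\frac{s\,(1-s^2)^{(d-1)/2}}{(1-r^2s^2)^{5/2}}\Big[(1+rs)^{-(d-3)}-(1-rs)^{-(d-3)}\Big]\,ds.
\]
For $d\ge4$ one has $d-3\ge1$ and $1+rs>1-rs>0$ on $(0,1)$, so the bracket is strictly negative there; hence the integral is negative, $J(r)>0$, and $\phi''(r)<0$ on all of $(0,\infty)$, so $\phi$ is strictly concave. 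For $d=3$ the bracket is identically zero, so $J\equiv0$ on $(0,1)$; together with $\phi\in C^1$ and $\phi(0)=0$ this forces $\phi$ to be linear on $[0,1]$, while $\phi''<0$ on $(1,\infty)$ gives strict concavity there and hence concavity on all of $[0,\infty)$.

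I expect the main obstacle to be locating the right change of variables in the range $0<r<1$. The naive symmetrization — pairing $t$ with $2r-t$ about the zero of $r-t$ — gives terms that are not individually sign-definite when $d=4,5$, so one really needs a substitution, such as the Möbius map above, that linearizes $r-t$, $1-t^2$ and $r^2-2rt+1$ simultaneously; once that is done the sign is transparent. The remaining ingredients — the reduction to one dimension, the two limits, and the integrability bookkeeping behind the regularity statement — are routine.
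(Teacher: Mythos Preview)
Your proof is correct and takes a genuinely different route from the paper's. The paper works in the angular variable $\theta$ and, after writing $\phi''(r)=-\frac{3\omega_{d-1}}{\omega_d}\int_0^\pi (\sin\theta)^d(r-\cos\theta)A^{-5}\,d\theta$, performs an integration by parts via $(\sin^{d+1}\theta)'$ to rewrite $\phi''$ as
\[
-\frac{3\omega_{d-1}}{\omega_d}\int_0^\pi \frac{r(\sin\theta)^d}{A^7}\Bigl(A^2-\tfrac{5}{d+1}\sin^2\theta\Bigr)\,d\theta,
\]
whose integrand is $\ge r(\sin\theta)^d(r-\cos\theta)^2/A^7\ge 0$ precisely when $d\ge4$. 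For $d=3$ the paper instead derives a first-order ODE for $\phi$ and solves it explicitly, obtaining $\phi(r)=2r/3$ on $[0,1]$ and $\phi(r)=1-1/(3r^2)$ on $(1,\infty)$.

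Your approach replaces the integration by parts with the M\"obius change of variables $t=(s+r)/(1+rs)$, which simultaneously linearizes $r-t$, $1-t^2$ and $r^2-2rt+1$ and leads, after symmetrization, to the bracket $(1+rs)^{-(d-3)}-(1-rs)^{-(d-3)}$ whose sign is transparent. A pleasant feature of your argument is that the $d=3$ case requires no separate treatment: the bracket vanishes identically, yielding $\phi''\equiv0$ on $(0,1)$ automatically. The paper's approach, on the other hand, has the advantage of producing the explicit closed form of $\phi$ in three dimensions, which your argument does not directly recover. Both routes handle the regularity at $r=1$ in essentially the same way, via the pointwise bound $1-t^2\le r^2-2rt+1$ and dominated convergence.
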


Suppose for the moment that Lemma \ref{lem3.2} is verified. We prove Theorem \ref{thm3.1} by a contradiction argument. Suppose $(\mu_t)$ is a radially symmetric first-kind similarity measure-valued solution such that there are two numbers
$$0<\rho_1<\rho_2,\quad \rho_1,\rho_2\in\text{supp}\,\hat\mu_0.
$$
Denote $w(\rho):=|v_0(\rho e_1)|$. By \eqref{eq16.39},
$$
w(\rho_k)=\int_0^\infty \phi(\rho_k/\rho)\,d\hat \mu_0(\rho),\quad k=1,2.
$$
It follows from Lemma \ref{lem3.2} that
$$
\phi(\rho_1/\rho)\ge \frac {\rho_1}{\rho_2} \phi(\rho_2/\rho),
$$
and the inequality is strict for all $\rho$ in a small neighborhood of $\rho_1$ because $\rho_2/\rho_1>1$. Since $\rho_1\in \text{supp}\,\hat\mu_0$, we get
$$
w(\rho_1)> \frac {\rho_1}{\rho_2} w(\rho_2).
$$
On the other hand, from \eqref{eq16.08} $w$ is a linear function on $\text{supp}\,\hat\mu_0$:
$$
w(\rho_1)= \frac {\rho_1}{\rho_2} w(\rho_2).
$$
Therefore, we reach a contradiction.

We finish this section by proving Lemma \ref{lem3.2}.

\begin{proof}[Proof of Lemma \ref{lem3.2}]
We rewrite \eqref{eq16.38} as
\begin{equation}
                                    \label{eq16.59}
\phi(r)=\frac {\omega_{d-1}}{\omega_d}\int_0^\pi \frac {(r-\cos\theta)(\sin\theta)^{d-2}}{A(r,\theta)}\,d\theta,
\end{equation}
where
$$
A(r,\theta)=(1+r^2-2r\cos\theta)^{1/2}.
$$
For $r\in [0,1)\cap (1,\infty)$, a direct computation gives
\begin{align}
                                \label{eq17.02}
\phi'(r)&=\frac {\omega_{d-1}}{\omega_d}\int_0^\pi (\sin\theta)^{d-2}\left(\frac 1 A-\frac {(r-\cos\theta)^2}{A^3}\right)\,d\theta\nonumber\\
&=\frac {\omega_{d-1}}{\omega_d}\int_0^\pi \frac {(\sin\theta)^{d}} {A^3}\,d\theta,
\end{align}
and
\begin{equation}
                                \label{eq17.06}
\phi''(r)=-\frac {3\omega_{d-1}}{\omega_d}\int_0^\pi \frac {(\sin\theta)^d(r-\cos\theta)} {A^5}\,d\theta.
\end{equation}
Integration by parts yields
\begin{align*}
\phi''(r)&=-\frac {3\omega_{d-1}}{\omega_d}\int_0^\pi \frac {r(\sin\theta)^d} {A^5}-
\frac {\left((\sin\theta)^{d+1}\right)'} {(d+1)A^5}\,d\theta\\
&=-\frac {3\omega_{d-1}}{\omega_d}\int_0^\pi \frac {r(\sin\theta)^d} {A^5}-
\frac {5(\sin\theta)^{d+1}r\sin\theta} {(d+1)A^7}\,d\theta\\
&=-\frac {3\omega_{d-1}}{\omega_d}\int_0^\pi \frac {r(\sin\theta)^d} {A^7}
\left(r^2+1-2r\cos\theta-\frac 5 {d+1} \sin^2\theta\right)\,d\theta.
\end{align*}

{\em Case 1: $d\ge 4$.} In this case, we have
$$
r^2+1-2r\cos\theta-\frac 5 {d+1} \sin^2\theta\ge (r-\cos\theta)^2.
$$
Therefore, we get $\phi''<0$ on $(0,1)\cap (1,\infty)$.
Note that because
$$
|r-\cos\theta|\le A,\quad r |\sin\theta|\le A,
$$
the integrals on the right-hand sides of \eqref{eq17.02} and \eqref{eq17.06} are absolutely convergent and continuous at $r=1$ by the dominated convergence theorem. Thus \eqref{eq17.02} and \eqref{eq17.06} hold on the whole region $[0,\infty)$. So we conclude $\phi\in C^2([0,\infty))$ and $\phi''<0$ on $(0,\infty)$. Moreover, since $\phi(0)=0$, $\phi\to 1$ as $r\to \infty$ and $\phi'>0$ by \eqref{eq17.02}, $\phi$ is bounded and non-negative on $[0,\infty)$.

{\em Case 2: $d=3$.} As before, the integral on the right-hand side of \eqref{eq17.02} is absolutely convergent and continuous at $r=1$ by the dominated convergence theorem. Thus \eqref{eq17.02} holds on the whole region $[0,\infty)$, and we conclude $\phi\in C^1([0,\infty))$ and $\phi'>0$ on $[0,\infty)$. For $d=3$, one can explicitly compute $\phi$. From \eqref{eq16.59}, we have
\begin{align}
                                    \label{eq11.22}
\phi(r)&=\frac 1 2\int_0^\pi \frac {(r-\cos\theta)\sin\theta}{A(r,\theta)}\,d\theta\nonumber\\
&=\frac 1 2\int_0^\pi \partial_\theta A-\frac {\cos\theta\sin\theta}{A}\,d\theta\nonumber\\
&=\frac 1 2(|1+r|-|1-r|)-I,
\end{align}
where,
\begin{align*}
I:=\frac 1 2\int_0^\pi \frac {\cos\theta\sin\theta}{A}\,d\theta.
\end{align*}
Integrating by parts and using \eqref{eq17.02}, we get
$$
I=\frac 1 4\int_0^\pi \frac {(\sin^2\theta)'}{A}\,d\theta
=\frac 1 4\int_0^\pi \frac {r\sin^3\theta}{A^3}\,d\theta=\frac r 2 \phi'(r).
$$
Thus, $\phi$ satisfies
$$
\phi(r)=\left\{
  \begin{aligned}
    &r- \frac {r} 2 \phi'(r)\quad\hbox{for $r\in [0,1]$;} \\
    &1- \frac r 2 \phi'(r)\quad\hbox{for $r\in (1,\infty)$,}
  \end{aligned}
\right.
$$
and $\phi(1)=2/3$ by \eqref{eq16.59}. Solving this ordinary differential equation, we obtain
$$
\phi(r)=\left\{
  \begin{aligned}
    &{2r}/3\quad\hbox{for $r\in [0,1]$;} \\
    &1- \frac {1} {3r^2}\quad\hbox{for $r\in (1,\infty)$,}
  \end{aligned}
\right.
$$
which immediately shows all the remaining claims in ii). The lemma is proved.
\end{proof}

\begin{remark}
In the case $d=2$, it is easy to check that $\phi$ is convex in $(0,1)$, concave in $(1,\infty)$, and $\phi'\to+\infty$ as $r\to 1$.
\end{remark}



\mysection{Ill-posedness and instantaneous concentration when $K(x)=|x|^\alpha$}

In this section, we study the ill-posedness and the instantaneous mass concentration phenomenon for general power-law kernel $K(x)=|x|^\alpha,0<\alpha<2$. In \cite{BLR}, the authors established the instantaneous mass concentration for certain initial data in $L_p,p<p_s$ in the special case $\alpha=1$, and conjectured that it remains true for general $\alpha$. We prove this conjecture in this section.

\begin{theorem}[Instantaneous mass concentration]
                            \label{thm2}
Let $K(x)=|x|^\alpha,\alpha\in (0,2)$, $p_s=d/(d+\alpha-2)$ and
\begin{equation}
                                \label{eq22.16}
u_0(x)=\frac L {|x|^{d+\alpha-2+\epsilon}}1_{|x|\le 1/2},
\end{equation}
where
$$
\epsilon\in (0,1),\quad
L=\int_{|x|\le 1/2}|x|^{-d-\alpha+2-\epsilon}\,dx
$$
is a normalization constant. Note that $u_0\in \cP_2(\bR^d)\cap L_{p}(\bR^d)$ for any $p\in [1,d/(d+\alpha-2+\epsilon))$. Let $(\mu_t)_{t\in (0,\infty)}$ be a radially symmetric weakly continuous measure-valued solution to the aggregation equation \eqref{Agg}. Then for any $t>0$ we have
$$
\mu_t(\{0\})>0.
$$
Therefore mass is concentrated at the origin instantaneously and the solution is singular with respect to the Lebesgue measure.
\end{theorem}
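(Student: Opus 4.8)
The plan is to follow the two-step scheme of the proof of Theorem~\ref{thm1}, the essential new feature being that for $p<p_s$ the exponent $\epsilon$ in \eqref{eq22.16} is strictly positive: this makes the velocity genuinely superlinear near the origin, so the flow map reaches $0$ in finite time rather than merely shrinking super-exponentially as in the critical case. By radial symmetry, a radially symmetric weakly continuous measure-valued solution $(\mu_t)$ is governed, exactly as in Lemma~\ref{lem2.4}, by the scalar flow $\hat\mu_t=R_t^{\#}\hat\mu_0$, where for each $r$ the map $t\mapsto R_t(r)$ is absolutely continuous, non-negative and non-increasing and solves $\frac{d}{dt}R_t(r)=-|(\nabla K*\mu_t)(R_t(r)e_1)|$ for a.e.\ $t$; establishing this representation for $K(x)=|x|^\alpha$ is the analog of Lemma~\ref{lem2.4} (for $\alpha\ge1$ one may take the solution produced in \cite{CFFLS}).

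\emph{Step 1: the radial kernel.} The first task is the analog of Lemma~\ref{lem2.3}: for radially symmetric $\mu$,
\[
(\nabla K*\mu)(x)=\alpha|x|^{\alpha-1}\Big(\int_0^\infty\psi_\alpha(\rho/|x|)\,d\hat\mu(\rho)\Big)\frac{x}{|x|},\qquad
\psi_\alpha(s)=\dashint_{S_1}\frac{1-sy_1}{|e_1-sy|^{2-\alpha}}\,d\sigma(y),
\]
and one must show that $\psi_\alpha$ is continuous on $[0,\infty)$, strictly positive, non-increasing, with $\psi_\alpha(0)=1$ and $\psi_\alpha(s)\,s^{2-\alpha}\to(d+\alpha-2)/d>0$ as $s\to\infty$ (recovering the value $(d-1)/d$ of Lemma~\ref{lem2.3} when $\alpha=1$). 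Continuity is harmless even at $s=1$, since the numerator $1-sy_1$ vanishes to second order wherever $e_1-sy$ does, and the limit at infinity follows from a $1/s$-expansion of the integrand. Positivity is clean: $\alpha\psi_\alpha(s)$ is the value at $r=1$ of the radial derivative of the $S_s$-averaged potential $r\mapsto\dashint_{S_1}|re_1-sy|^\alpha\,d\sigma(y)$, and since $|z|^\alpha$ is subharmonic for $0<\alpha<2$ (because $\Delta|z|^\alpha=\alpha(\alpha+d-2)|z|^{\alpha-2}>0$ off the origin), this average is a radial subharmonic function with vanishing derivative at $r=0$, hence strictly increasing. The monotonicity of $\psi_\alpha$ is the delicate point and what I expect to be the main obstacle: writing $\psi_\alpha(s)=\frac{\omega_{d-1}}{\omega_d}\int_0^\pi(1-s\cos\theta)(1+s^2-2s\cos\theta)^{-(2-\alpha)/2}(\sin\theta)^{d-2}\,d\theta$ and differentiating gives
\[
\psi_\alpha'(s)=\frac{\omega_{d-1}}{\omega_d}\int_0^\pi\frac{(1-\alpha)(1+s^2)\cos\theta+\alpha s\cos^2\theta-(2-\alpha)s}{(1+s^2-2s\cos\theta)^{(4-\alpha)/2}}\,(\sin\theta)^{d-2}\,d\theta,
\]
and for $\alpha=1$ the bracketed numerator collapses to the manifestly negative $-s\sin^2\theta$, whereas for general $\alpha\in(0,2)$ it changes sign and the negativity has to be extracted by integrating by parts in $\theta$ against $(\sin\theta)^{d-2}\,d\theta$, in the spirit of the computation in the proof of Lemma~\ref{lem3.2}. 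Should this prove too stubborn, the monotonicity can be bypassed: positivity of $\psi_\alpha$ already gives $|(\nabla K*\mu_t)(ye_1)|\ge\alpha c_1\,y^{\alpha-1}\hat\mu_t([0,y])$ with $c_1=\min_{[0,1]}\psi_\alpha>0$, and $\hat\mu_t([0,R_t(r)])\ge\hat\mu_0([0,r])$ since flow lines do not cross, which leads to an ODE $\frac{d}{dt}R_t(r)\le-C_r\,R_t(r)^{\alpha-1}$ with $C_r>0$ that still forces $R_t(r)$ to $0$ in finite time because $\alpha<2$.

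\emph{Step 2: velocity lower bound at $t=0$.} For $u_0$ as in \eqref{eq22.16}, with $\epsilon$ small enough that $u_0\in\cP_2(\bR^d)\cap L_p(\bR^d)$ for $p<d/(d+\alpha-2+\epsilon)<p_s$ (which needs $\epsilon<2-\alpha$), one has $d\hat u_0(\rho)=L\omega_d\,\rho^{1-\alpha-\epsilon}1_{\rho\le1/2}\,d\rho$. Since $\psi_\alpha(s)\,s^{2-\alpha}$ is continuous and positive on $[1,\infty)$ with a positive limit at infinity, there is $\delta_0>0$ with $\psi_\alpha(s)\ge\delta_0 s^{\alpha-2}$ for $s\ge1$, so for $|x|$ small,
\[
|(\nabla K*u_0)(x)|\ge\alpha|x|^{\alpha-1}\int_{|x|}^{1/2}\psi_\alpha(\rho/|x|)\,L\omega_d\rho^{1-\alpha-\epsilon}\,d\rho
\ \ge\ \alpha\delta_0 L\omega_d\,|x|\int_{|x|}^{1/2}\rho^{-1-\epsilon}\,d\rho\ \ge\ \delta_1|x|^{1-\epsilon},
\]
using $\int_{|x|}^{1/2}\rho^{-1-\epsilon}\,d\rho=\epsilon^{-1}(|x|^{-\epsilon}-2^{\epsilon})\ge\frac{1}{2\epsilon}|x|^{-\epsilon}$ for small $|x|$ and extending to $r_0\le|x|<1/2$, with a fixed small $r_0$, via the positivity and continuity of $|(\nabla K*u_0)(\cdot)|$ off the origin. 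This is the analog of Lemma~\ref{lem2.5}.

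\emph{Step 3: the ODE and the conclusion.} Since $\psi_\alpha$ is non-increasing and $R_t(\rho)$ decreases in $t$, the quantity $|(\nabla K*\mu_t)(x)|=\alpha|x|^{\alpha-1}\int_0^\infty\psi_\alpha(R_t(\rho)/|x|)\,d\hat\mu_0(\rho)$ is non-decreasing in $t$ for each fixed $x$ (the analog of the last assertion of Lemma~\ref{lem2.4}), hence $\ge|(\nabla K*u_0)(x)|$. Fix $r\in(0,\min\{r_0,1/2\})$; while $R_t(r)>0$ we have $0<R_t(r)\le r<r_0$, so
\[
\frac{d}{dt}R_t(r)=-|(\nabla K*\mu_t)(R_t(r)e_1)|\le-|(\nabla K*u_0)(R_t(r)e_1)|\le-\delta_1 R_t(r)^{1-\epsilon},
\]
whence $\frac{d}{dt}\big(R_t(r)^{\epsilon}\big)\le-\epsilon\delta_1$ and $R_t(r)^{\epsilon}\le r^{\epsilon}-\epsilon\delta_1 t$. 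Being absolutely continuous, non-negative and non-increasing, $R_t(r)$ therefore vanishes no later than $t=r^{\epsilon}/(\epsilon\delta_1)$ and stays $0$ afterwards. Fixing now $t>0$ and choosing $r<\min\{r_0,\tfrac12,(\epsilon\delta_1 t)^{1/\epsilon}\}$, the same estimate applied to each $r'\in(0,r]$ gives $R_t(r')=0$, so
\[
\mu_t(\{0\})=\hat\mu_t(\{0\})=\hat\mu_0\big(\{\rho\ge0:R_t(\rho)=0\}\big)\ge\hat\mu_0\big([0,r]\big)=\mu_0(B_r)=L\int_{B_r}|x|^{-(d+\alpha-2+\epsilon)}\,dx>0.
\]
As with \eqref{eq23.49}, only the one-sided bound $\hat\mu_0(\{R_t=0\})\ge\hat\mu_0([0,r])$ is needed, so possible earlier concentration does not affect the argument; this establishes $\mu_t(\{0\})>0$ for every $t>0$.
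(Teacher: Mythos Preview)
Your proposal is correct and follows the same approach as the paper: the radial kernel representation and properties of $\psi_\alpha$ (Lemma~\ref{lem3.3}), the velocity lower bound $|(\nabla K*u_0)(x)|\ge\delta_1|x|^{1-\epsilon}$ (Lemma~\ref{lem3.5}), monotonicity of the speed in $t$ (Lemma~\ref{lem3.4}), and the finite-time collapse of the ODE $R_t'\le-\delta_1 R_t^{1-\epsilon}$ are exactly the paper's ingredients. The monotonicity of $\psi_\alpha$ that you flag as the main obstacle is indeed handled, as you anticipate, by integration by parts in $\theta$, which collapses $\psi_\alpha'(\rho)$ to the manifestly negative expression $\frac{\omega_{d-1}(\alpha-2)(d+\alpha-2)}{\omega_d(d-1)}\int_0^\pi\frac{\rho(\sin\theta)^d}{A(\rho,\theta)^{4-\alpha}}\,d\theta$; your bypass via positivity alone is therefore unnecessary, though it is a valid alternative.
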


We shall also prove the following theorem which is a generalization of Theorem \ref{thm1}. It reads that the aggregation equation \eqref{Agg} is ill-posed in the critical space $\cP_2(\bR^d)\cap L_{p_s}(\bR^d)$.

\begin{theorem}
                            \label{thm3}
Let $K(x)=|x|^\alpha,\alpha\in (0,2)$, $p_s=d/(d+\alpha-2)$, $k\in (1/p_s, 1)$ and
\begin{equation}
                                            \label{eq22.17}
u_0(x)=\frac L {|x|^{d+\alpha-2}(-\log |x|)^{k}}1_{|x|\le 1/2}\in \cP_2(\bR^d)\cap L_{p_s}(\bR^d),
\end{equation}
where
$$
L=\int_{|x|\le 1/2}|x|^{-d-\alpha+2}(-\log |x|)^{-k}\,dx
$$
is a normalization constant. Let $(\mu_t)_{t\in (0,\infty)}$ be a radially symmetric weakly continuous measure-valued solution to the aggregation equation \eqref{Agg}. Then for any $t>0$ the density of $\mu_t$, if exists, is not in $L_{p_s}(\bR^d)$.
\end{theorem}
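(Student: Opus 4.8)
The plan is to reproduce, step by step, the proof of Theorem \ref{thm1}, with the kernel $\psi$ of Lemma \ref{lem2.3} replaced by its counterpart for the potential $K(x)=|x|^\alpha$. I would need three $K(x)=|x|^\alpha$ analogues of the tools of Section \ref{sec2}. First, a polar representation
\[
(\nabla K*\mu)(x)=|x|^{\alpha-1}\Big(\int_0^\infty\psi_\alpha(\rho/|x|)\,d\hat\mu(\rho)\Big)\frac{x}{|x|},\qquad
\psi_\alpha(s)=\alpha\dashint_{S_1}\frac{1-sy_1}{|e_1-sy|^{2-\alpha}}\,d\sigma(y),
\]
for radially symmetric $\mu$, where $\psi_\alpha$ is continuous, positive and non-increasing on $[0,\infty)$, with $\psi_\alpha(0)>0$ and $\psi_\alpha(s)\,s^{2-\alpha}$ tending to a positive constant as $s\to\infty$; the extra factor $|x|^{\alpha-1}$ reflects that $\nabla|x|^\alpha$ is homogeneous of degree $\alpha-1$. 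This is Lemma \ref{lem3.3}. Second, the analogue of Lemma \ref{lem2.4}: for any radially symmetric weakly continuous measure-valued solution one has $\hat\mu_t=R_t^\#\hat\mu_0$ with $R_t(\rho)$ absolutely continuous and non-increasing in $t$, and, because $\psi_\alpha$ is non-increasing, the quantity $|v(t,x)|=|x|^{\alpha-1}\int_0^\infty\psi_\alpha(R_t(\rho)/|x|)\,d\hat\mu_0(\rho)$ is non-decreasing in $t$ for each fixed $x\neq 0$. Third, the pointwise lower bound for $|v(0,\cdot)|$ corresponding to the data \eqref{eq22.17}, that is, the analogue of Lemma \ref{lem2.5}.

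The crucial point is that the exponent $d+\alpha-2$ in \eqref{eq22.17} is chosen precisely so that this third ingredient produces exactly the same bound as when $\alpha=1$. Indeed $\hat u_0(\rho)=L\omega_d\,\rho^{1-\alpha}(-\log\rho)^{-k}1_{\rho\le 1/2}$, and for $0<|x|<1/4$ I restrict the $\rho$-integral to $(|x|,1/2)$ and use $\psi_\alpha(\rho/|x|)\ge\delta_0(|x|/\rho)^{2-\alpha}$ for $\rho/|x|\ge 1$ (valid because $s\mapsto\psi_\alpha(s)s^{2-\alpha}$ is continuous, positive and has a positive limit at $\infty$, hence is bounded below on $[1,\infty)$). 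The powers $|x|^{\alpha-1}$, $|x|^{2-\alpha}$ and $\rho^{-(2-\alpha)}\rho^{1-\alpha}=\rho^{-1}$ collapse to yield
\[
|(\nabla K*u_0)(x)|\ge \delta_0 L\omega_d\,|x|\int_{|x|}^{1/2}(-\log\rho)^{-k}\,\frac{d\rho}{\rho}\ge \delta_1\,|x|(-\log|x|)^{1-k},
\]
and by the positivity and continuity of $v(0,\cdot)$ on $\bR^d\setminus\{0\}$ this extends, with a smaller $\delta_1$, to all $0<|x|<1/2$. From here the argument is word-for-word that of Theorem \ref{thm1}: the time-monotonicity of $|v|$ yields $\frac{d}{dt}R_t(r)\le-\delta_1 R_t(r)(-\log R_t(r))^{1-k}$ for $r\in(0,1/2)$, which integrates to $(-\log R_t(r))^k\ge(-\log r)^k+k\delta_1 t$ and hence, for $r$ small depending on $t$, to $R_t(r)\le r\,e^{-\delta_2 t(-\log r)^{1-k}}$ with $\delta_2=\delta_1/2$.

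The endgame differs from Theorem \ref{thm1} only in the H\"older exponent. Suppose, for some $t>0$, $\mu_t$ has a density $u(t,\cdot)\in L_{p_s}(\bR^d)$. The conjugate exponent of $p_s=d/(d+\alpha-2)$ is $p_s'=d/(2-\alpha)$, so H\"older's inequality gives
\[
\mu_t(B_{R_t(r)})\le \|u(t,\cdot)\|_{L_{p_s}(\bR^d)}\,|B_{R_t(r)}|^{(2-\alpha)/d}= C\,\|u(t,\cdot)\|_{L_{p_s}(\bR^d)}\,R_t(r)^{2-\alpha},
\]
while, by the definitions of $\mu_t$ and $R_t$, $\mu_t(B_{R_t(r)})\ge\mu_0(B_r)=L\omega_d\int_0^r\rho^{1-\alpha}(-\log\rho)^{-k}\,d\rho$ (the inequality being strict only if mass has concentrated before time $t$). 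Combining these with the bound on $R_t(r)$,
\[
\|u(t,\cdot)\|_{L_{p_s}(\bR^d)}\ge \frac{L\omega_d}{C}\cdot\frac{\int_0^r\rho^{1-\alpha}(-\log\rho)^{-k}\,d\rho}{r^{2-\alpha}\,e^{-(2-\alpha)\delta_2 t(-\log r)^{1-k}}}.
\]
Letting $r\searrow 0$ and applying L'Hospital's rule, using $\frac{d}{dr}\big(r^{2-\alpha}e^{-\beta(-\log r)^{1-k}}\big)=r^{1-\alpha}e^{-\beta(-\log r)^{1-k}}\big((2-\alpha)+\beta(1-k)(-\log r)^{-k}\big)$ with $\beta=(2-\alpha)\delta_2 t$, the ratio reduces to $(-\log r)^{-k}e^{\beta(-\log r)^{1-k}}/\big((2-\alpha)+o(1)\big)$, which tends to $\infty$ since $1-k>0$. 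This contradicts $u(t,\cdot)\in L_{p_s}(\bR^d)$ and proves Theorem \ref{thm3}.

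Everything past the third ingredient is a routine transcription of Section \ref{sec2}; the real work, and the main obstacle, is Lemma \ref{lem3.3} — showing that $\psi_\alpha$ is non-increasing on $[0,\infty)$ and controlling $\psi_\alpha(s)s^{2-\alpha}$ from below. For $\alpha=1$ this was supplied by \cite{BLR}, but for general $\alpha\in(0,2)$ the integrand $(1-sy_1)|e_1-sy|^{\alpha-2}$ is more delicate: it changes sign and, when $\alpha<1$, has an integrable singularity at $s=1$, $y=e_1$, so establishing $\psi_\alpha'\le 0$ should require an integration-by-parts argument in the spirit of the computation of $\phi''$ in the proof of Lemma \ref{lem3.2}. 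Once Lemma \ref{lem3.3} and the flow-map analogue of Lemma \ref{lem2.4} are available, Theorem \ref{thm3} follows as above.
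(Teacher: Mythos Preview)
Your proposal is correct and follows essentially the same route as the paper: the paper establishes the velocity lower bound (Lemma \ref{lem4.5}) exactly as you describe, invokes the flow-map lemma (Lemma \ref{lem3.4}) and the monotonicity/decay properties of $\psi$ (Lemma \ref{lem3.3}), and then transcribes the proof of Theorem \ref{thm1} with $p_s$ in place of $d/(d-1)$, arriving at the same L'Hospital contradiction. Your identification of Lemma \ref{lem3.3} as the main technical input, and of an integration-by-parts computation for $\psi_\alpha'$ as the way to prove it, is also on target.
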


\begin{remark}
In the case that $\alpha\in (2-d,0),d\ge 3$, we infer from Lemma \ref{lem3.3} below that the velocity field $v$ is repulsive in the sense that, for radially symmetric solution, $v(t,x)$ is in the $x$ direction. So one cannot expect the instantaneous mass concentration phenomenon in this case. In fact, we believe that in the case $\alpha\in (2-d,0)$, the aggregation equation \eqref{Agg} is well-posed in $L_p$ for $p\in (1,\infty)$. Formally, integrating by parts, we obtain for any $p\in (1,\infty)$,
\begin{equation*}
\frac d {dt} \int_{\bR^d}u(t,x)^p\,dx=-(p-1)\int_{\bR^d}u(t,x)^p\Div v(t,x)\,dx,\quad t>0.
\end{equation*}
By the definition,
$$
\Div v=-\Delta K*u=-{\alpha(\alpha+d-2)}|x|^{\alpha-2}*u\ge 0.
$$
Therefore, $\int_{\bR^d}u(t,x)^p\,dx$ is non-increasing in $t$.  On the other hand, if instead $v$ is defined by
$$
v=\nabla K*u,
$$
then the velocity field is attractive, and we can see from the proofs below that the results of Theorems \ref{thm2} and \ref{thm3} remain valid in the case $\alpha\in (2-d,0)$.
\end{remark}

\subsection{Proof of Theorem \ref{thm2}}

For a fixed $\alpha\in (2-d,2)$, we define a function $\psi:[0,\infty)\to \bR$ by
\begin{equation}
                                \label{eq10.37}
\psi(\rho)=\dashint_{S_1}\frac {1-\rho y_1} {|e_1-\rho y|^{2-\alpha}}\,d\sigma(y).
\end{equation}

The following lemma plays a key role in our proof.

\begin{lemma}
                        \label{lem3.3}
Let $K(x)=|x|^\alpha,\alpha\in (2-d,2)$, $\mu\in \cP(\bR^d)$ is a radially symmetric probability measure. Then for any $x\neq 0$, we have
\begin{equation}
                        \label{eq10.05}
(\nabla K*\mu)(x)=\alpha|x|^{\alpha-1}\int_0^\infty \psi(\rho/|x|)\,d\hat \mu(\rho) \frac x {|x|}.
\end{equation}
Moreover, $\psi$ is continuous, positive, non-increasing on $[0,\infty)$, and
$$
\psi(0)=1,\quad \lim_{\rho\to \infty}\psi(\rho)\rho^{2-\alpha}=\frac {d+\alpha-2} d.
$$
\end{lemma}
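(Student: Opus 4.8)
The plan is to follow the same two-stage scheme used for $\phi$ in Lemma~\ref{lem3.2} and for the case $\alpha=1$ in Lemma~\ref{lem2.3}: first establish the polar representation \eqref{eq10.05}, then reduce every qualitative property of $\psi$ to a property of the single-sphere potential $h(s):=\omega_d^{-1}\int_{S_1}|se_1-w|^\alpha\,d\sigma(w)$, $s\ge 0$. For the representation: since $\nabla K(z)=\alpha|z|^{\alpha-2}z$, one has $(\nabla K*\mu)(x)=\alpha\int_{\bR^d}|x-y|^{\alpha-2}(x-y)\,d\mu(y)$; disintegrating the radial measure $\mu$ over the spheres $S_\rho$ (on each of which $\mu$ is a multiple, with weight $d\hat\mu(\rho)$, of the normalized uniform measure), using rotational invariance to take $x=|x|e_1$ so that only the $e_1$-component survives, and rescaling each sphere to $S_1$ via $y=\rho w$, the integral over $S_\rho$ turns into $|x|^{\alpha-1}\psi(\rho/|x|)$ with $\psi$ as in \eqref{eq10.37}, which is \eqref{eq10.05}. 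The surface integral defining $\psi$ converges absolutely: the numerator $1-\rho y_1$ vanishes to second order at the only possible singularity (where $|e_1-\rho y|\to 0$, i.e. $\rho=1$, $y=e_1$), so the integrand is $O(|e_1-\rho y|^{\alpha})$ there, and $|e_1-\rho y|^{\alpha}$ is integrable on $S_1$ because $\alpha>2-d>1-d$. In particular $\psi(0)=1$; and, as in Lemma~\ref{lem2.3}, when $\mu$ has atoms the convolution at a charged point is read as the symmetric limit, for which the same identity holds.

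Next I would reduce to $h$. Running the same scaling on the potential $K*\mu$ instead of its gradient gives $\alpha r^{\alpha-1}\psi(\rho/r)=\rho^{\alpha-1}h'(r/\rho)$ for all $r,\rho>0$; taking $r=1$ yields $\psi(\rho)=\alpha^{-1}\rho^{\alpha-1}h'(1/\rho)$. Now $h$ is $C^\infty$ on $[0,1)\cup(1,\infty)$, and $h'$ is continuous on all of $[0,\infty)$: from $h'(s)=\tfrac{\alpha}{\omega_d}\int_{S_1}|se_1-w|^{\alpha-2}(s-w_1)\,d\sigma(w)$ and the elementary bound $|s-w_1|\le|se_1-w|$ (valid on $S_1$, since $|se_1-w|^2=(s-w_1)^2+1-w_1^2$) the integrand is dominated by $|se_1-w|^{\alpha-1}$, whose integral over $S_1$ is finite uniformly for $s$ in compacts precisely because $\alpha>2-d$; dominated convergence gives $h'\in C([0,\infty))$, hence $\psi\in C([0,\infty))$ (continuity at $\rho=0$ being immediate from the defining integral). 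Differentiating $h$ under the integral near $s=0$ is legitimate (smooth integrand there), and using $\int_{S_1}w_1\,d\sigma=0$ and $\int_{S_1}w_1^2\,d\sigma=\omega_d/d$ one gets $h'(0)=0$ and $h''(0)=\alpha(d+\alpha-2)/d$; Taylor-expanding $h'$ at $0$ then gives $\psi(\rho)\rho^{2-\alpha}\to(d+\alpha-2)/d$ as $\rho\to\infty$.

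For monotonicity I would differentiate $\psi(\rho)=\alpha^{-1}\rho^{\alpha-1}h'(1/\rho)$ for $\rho\neq1$ and eliminate $h''$ by the radial identity $h''(s)=\Delta h(s)-\tfrac{d-1}{s}h'(s)$, where $\Delta h(s)=\tfrac{\alpha(d+\alpha-2)}{\omega_d}\int_{S_1}|se_1-w|^{\alpha-2}\,d\sigma(w)$ for $s\neq1$. After cancellation everything collapses to $\psi'(\rho)=-\rho^{\alpha-2}(d+\alpha-2)\,Q(1/\rho)$ with $Q(s):=\omega_d^{-1}\int_{S_1}|se_1-w|^{\alpha-2}w_1\,d\sigma(w)$. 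Since $d+\alpha-2>0$, monotonicity reduces to $Q(s)\ge0$ for $s>0$, which I would obtain by pairing $w$ with its reflection $w-2w_1e_1$: for $s>0$ and $w_1>0$ one has $|se_1-w|<|se_1-(w-2w_1e_1)|$, and since $\alpha-2<0$ the factor $|\cdot|^{\alpha-2}$ is larger at the nearer point, so the symmetrized integrand is nonnegative. Thus $\psi'\le0$ on $(0,1)\cup(1,\infty)$, and with the continuity already established, $\psi$ is non-increasing on $[0,\infty)$. Positivity is then automatic: were $\psi(\rho_0)\le0$ for some $\rho_0>0$, monotonicity would force $\psi\le0$ on $[\rho_0,\infty)$, contradicting $\psi(\rho)\rho^{2-\alpha}\to(d+\alpha-2)/d>0$; and $\psi(0)=1$.

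The step I expect to be the main obstacle is the behavior at $\rho=1$ (equivalently $s=1$), where $h$ is only $C^1$ and not $C^2$: one must verify that $\psi$ — and hence the velocity field $v=-\nabla K*\mu$ — stays continuous there, and it is exactly at this point that the hypothesis $\alpha>2-d$ (stronger than the $\alpha>1-d$ that sufficed for $\alpha=1$) is used; correspondingly, the monotonicity conclusion must be drawn from ``$\psi'\le0$ off a single point, plus continuity'' rather than from $\psi\in C^1$. A lesser technical point is making the representation \eqref{eq10.05} rigorous for measures with atoms, handled exactly as in Lemma~\ref{lem2.3}. One could alternatively prove the monotonicity working directly with the polar integral $\tfrac{\omega_{d-1}}{\omega_d}\int_0^\pi(1-\rho\cos\theta)(\sin\theta)^{d-2}(1+\rho^2-2\rho\cos\theta)^{-(2-\alpha)/2}\,d\theta$ and integrating by parts as in the proof of Lemma~\ref{lem3.2}, but the reduction to $h$ above is shorter.
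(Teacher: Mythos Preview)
Your proof is correct and takes a genuinely different route from the paper's. The paper works entirely in the polar-angle representation
\[
\psi(\rho)=\frac{\omega_{d-1}}{\omega_d}\int_0^\pi\frac{(1-\rho\cos\theta)(\sin\theta)^{d-2}}{A(\rho,\theta)^{2-\alpha}}\,d\theta,\qquad A(\rho,\theta)=(1+\rho^2-2\rho\cos\theta)^{1/2},
\]
differentiates under the integral, and after two integrations by parts in $\theta$ collapses $\psi'$ to the manifestly negative closed form
\[
\psi'(\rho)=\frac{\omega_{d-1}(\alpha-2)(d+\alpha-2)}{\omega_d(d-1)}\int_0^\pi\frac{\rho(\sin\theta)^d}{A(\rho,\theta)^{4-\alpha}}\,d\theta;
\]
the asymptotic limit is read off the same representation after one more integration by parts. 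Your reduction to the single-sphere potential $h$ via $\psi(\rho)=\alpha^{-1}\rho^{\alpha-1}h'(1/\rho)$ is more structural: the asymptotics become the Taylor expansion of $h'$ at $0$, and monotonicity becomes positivity of $Q(s)=\omega_d^{-1}\int_{S_1}|se_1-w|^{\alpha-2}w_1\,d\sigma(w)$, which you settle by a clean reflection argument rather than by computation. The two formulas for $\psi'$ are equivalent (your $Q(1/\rho)$ equals, up to a positive factor, the paper's integral $\int_0^\pi\rho(\sin\theta)^d A^{\alpha-4}\,d\theta$, as one sees by the paper's integration by parts on $I_1$), but what the paper's explicit $(\sin\theta)^d/A^{4-\alpha}$ form buys is that one more $\rho$-derivative in the same style yields the \emph{concavity} of $\phi$ used in Lemma~\ref{lem3.2}; your $Q$-formulation would need an additional step to recover that. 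Conversely, your approach separates the analytic input ($h'\in C([0,\infty))$ via the domination $|s-w_1|\le|se_1-w|$, which is exactly where $\alpha>2-d$ enters) from the geometric input ($Q>0$ by symmetry), and the latter would carry over verbatim to any radially symmetric kernel with $\Delta K$ of one sign.
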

\begin{proof}
Equality \eqref{eq10.05} follows from a direct computation. We only prove the second part of the lemma. Since $\alpha>2-d$ and
$$
\dashint_{S_1}\frac {|1-\rho y_1|} {|e_1-\rho y|^{2-\alpha}}\,d\sigma(y)
\le \dashint_{S_1}\frac 1 {|e_1-\rho y|^{1-\alpha}}\,d\sigma(y),
$$
the integral on the right-hand side of \eqref{eq10.37} is convergent for any $\rho\in [0,\infty)$, and by the dominated convergence theorem, it is easy to see that $\psi$ is continuous on $[0,\infty)$ and $\psi(0)=1$.
We rewrite \eqref{eq10.37} as
\begin{equation}
                            \label{eq11.54}
\psi(\rho)=\frac {\omega_{d-1}}{\omega_d}\int_0^\pi \frac {(1-\rho\cos\theta)(\sin\theta)^{d-2}}{A(\rho,\theta)^{2-\alpha}}\,d\theta,
\end{equation}
where
$$
A(\rho,\theta)=(1+\rho^2-2\rho\cos\theta)^{1/2}.
$$
For $\rho\in [0,1)\cap (1,\infty)$, a direct computation yields
\begin{equation}
                                        \label{eq11.00}
\psi'(\rho)=\frac {\omega_{d-1}}{\omega_d}(I_1+I_2),
\end{equation}
where
\begin{align*}
I_1&:=\int_0^\pi \frac {(-\cos\theta)(\sin\theta)^{d-2}}{A(\rho,\theta)^{2-\alpha}}\,d\theta,\\
I_2&:=(\alpha-2)\int_0^\pi\frac{(1-\rho\cos\theta)(\sin\theta)^{d-2}(\rho-\cos\theta)}
{A(\rho,\theta)^{4-\alpha}}\,d\theta.
\end{align*}
Integrating by parts gives us
\begin{equation}
                                        \label{eq11.06}
I_1=-\frac 1 {d-1} \int_0^\pi \frac {\left((\sin\theta)^{d-1}\right)'}{A(\rho,\theta)^{2-\alpha}}\,d\theta
=\frac {\alpha-2} {d-1} \int_0^\pi \frac {(\sin\theta)^{d}\rho}{A(\rho,\theta)^{4-\alpha}}\,d\theta.
\end{equation}
From \eqref{eq11.00} and \eqref{eq11.06}, we get
\begin{align}
                                        \label{eq11.09}
\psi'(\rho)&=\frac {\omega_{d-1}(\alpha-2)}{\omega_d}
\int_0^\pi\frac{(\sin\theta)^{d-2}}
{A(\rho,\theta)^{4-\alpha}}\left(\frac {\rho(\sin\theta)^2}{d-1}+(1-\rho\cos\theta)(\rho-\cos\theta)\right)\,d\theta,\nonumber\\
&=\frac {\omega_{d-1}(\alpha-2)}{\omega_d}
\int_0^\pi\frac{(\sin\theta)^{d-2}}
{A(\rho,\theta)^{4-\alpha}}\left(\frac {\rho(\sin\theta)^2 d}{d-1}-
A(\rho,\theta)^2\cos\theta \right)\,d\theta\nonumber\\
&=\frac {\omega_{d-1}(\alpha-2)}{\omega_d}
\left(\int_0^\pi\frac{\rho(\sin\theta)^{d}}
{A(\rho,\theta)^{4-\alpha}}\frac {d}{d-1}
\,d\theta+I_1\right).
\end{align}
Thanks to \eqref{eq11.09} and \eqref{eq11.06}, we have
\begin{equation*}
\psi'(\rho)=\frac {\omega_{d-1}(\alpha-2)(d+\alpha-2)}{\omega_d(d-1)}
\int_0^\pi\frac{\rho(\sin\theta)^{d}}
{A(\rho,\theta)^{4-\alpha}}\,d\theta.
\end{equation*}
Therefore, $\psi'(\rho)<0$ for $\rho\in [0,1)\cap (1,\infty)$ and, by the continuity of $\psi$, we conclude that $\psi$ is non-increasing on $[0,\infty)$. Finally, we consider the asymptotic
behavior of $\psi(\rho)$ as $\rho\to \infty$. It follows from \eqref{eq11.54} and \eqref{eq11.06} that
\begin{align*}
\psi(\rho)&=\frac {\omega_{d-1}}{\omega_d}\left(\int_0^\pi \frac {(\sin\theta)^{d-2}}{A(\rho,\theta)^{2-\alpha}}\,d\theta+\rho I_1\right)\\
&=\frac {\omega_{d-1}}{\omega_d}\int_0^\pi \frac {(\sin\theta)^{d-2}}{A(\rho,\theta)^{2-\alpha}}+\frac{\alpha-2}{d-1}
\frac {(\sin\theta)^{d}\rho^2}{A(\rho,\theta)^{4-\alpha}}\,d\theta.
\end{align*}
Therefore, $\psi(\rho)$ goes to zero as $\rho\to \infty$, since $\alpha<2$. Moreover,
\begin{align}
                                                \label{eq12.07}
\lim_{\rho\to\infty}\psi(\rho)\rho^{2-\alpha}
&=\frac {\omega_{d-1}}{\omega_d}\int_0^\pi (\sin\theta)^{d-2}+\frac{\alpha-2}{d-1}
(\sin\theta)^{d}\,d\theta\nonumber\\
&=\frac {\omega_{d-1}}{\omega_d}\int_0^\pi (\sin\theta)^{d-2}+\frac{\alpha-2}{d}
(\sin\theta)^{d-2}\,d\theta\nonumber\\
&=\frac{d+\alpha-2}{d}>0.
\end{align}
Here we used two elementary identities
\begin{align*}
&\int_0^\pi (\sin\theta)^d\,d\theta=\frac {d-1} d\int_0^\pi (\sin\theta)^{d-2}\,d\theta,\\
&\omega_{d-1}\int_0^\pi (\sin\theta)^{d-2}\,d\theta=\omega_{d}.
\end{align*}
Note that since $\psi$ is non-increasing, \eqref{eq12.07} also implies that $\psi$ is positive. The lemma is proved.
\end{proof}

The following lemma can be deduced from Lemma \ref{lem3.3} in the same way as Lemma \ref{lem2.4} is deduced from \eqref{lem2.3}. We omit the details and refer the reader to \cite[Sect. 4]{BLR}.

\begin{lemma}
                        \label{lem3.4}
Let $K(x)=|x|^\alpha,\alpha\in (0,2)$, and $(\mu_t)_{t\in [0,\infty)}$ be a radially symmetric weakly continuous measured-valued solution of the aggregation equation \eqref{Agg}. Then the vector field
$$
v(t,x)=-(\nabla K*\mu_t)1_{x\neq 0}
$$
is continuous on $[0,\infty)\times (\bR^d\setminus \{0\})$. For any $t\ge 0$, we have
$$
\mu_t=X_t^\#\mu_0,
$$
where, for each $x\in \bR^d$, $X_t=X_t(x)$ is an absolutely continuous function on $[0,\infty)$ satisfying
\begin{equation*}
\left\{
  \begin{aligned}
    \frac d {dt} X_t(x)&= v(t,X_t(x)) \quad\hbox{for a.e. $t\in (0,\infty)$;} \\
    X_0(x)&=x.
  \end{aligned}
\right.
\end{equation*}
Moreover, for any $x\neq 0$, $X_t(x)=R_t(|x|)x/|x|$, where $R_t$ is an absolutely continuous, non-negative and non-increasing function in $t\in [0,\infty)$, and
$$
\hat \mu_t=R_t^\#\hat \mu_0.
$$
Consequently, for any $x\neq 0$,
$$
(\nabla K*\mu_t)(x)=\alpha|x|^{\alpha-1}\int_0^\infty \psi(R_t(\rho)/|x|)\,d\hat \mu_0(\rho) \frac x {|x|},
$$
and $|(\nabla K*\mu_t)(x)|$ is non-decreasing in $t$.
\end{lemma}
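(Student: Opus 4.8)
The plan is to follow, essentially verbatim, the argument of \cite[Sect.~4]{BLR} underlying Lemma \ref{lem2.4}, with the representation \eqref{eq10.05} of Lemma \ref{lem3.3} playing the role that Lemma \ref{lem2.3} plays there. Throughout I write
\[
v(t,x)=-\alpha|x|^{\alpha-1}g(t,|x|)\frac{x}{|x|},\qquad g(t,r):=\int_0^\infty\psi(\rho/r)\,d\hat\mu_t(\rho),
\]
and I record that, by Lemma \ref{lem3.3}, $0<g(t,r)\le\psi(0)=1$ for all $t\ge0$, $r>0$, since $\psi$ is strictly positive and bounded by $1$ while $\hat\mu_t$ is a probability measure.

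\emph{Continuity of $v$ on $[0,\infty)\times(\bR^d\setminus\{0\})$.} Fix $r_0>0$. Being continuous, non-increasing and having the finite limits $\psi(0)=1$ and $\lim_{\rho\to\infty}\psi(\rho)=0$, the function $\psi$ is uniformly continuous on $[0,\infty)$; splitting the range of $\rho$ into a bounded part, where uniform continuity applies, and a large part, where both $\psi(\rho/r)$ and $\psi(\rho/r_0)$ are close to $0$, gives $\sup_{\rho\ge0}|\psi(\rho/r)-\psi(\rho/r_0)|\to0$ as $r\to r_0$. Since $\hat\mu_t$ is a probability measure, this controls $|g(t,r)-g(t,r_0)|$ uniformly in $t$. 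For the time variable, for each fixed $r>0$ the map $y\mapsto\psi(|y|/r)$ belongs to $C_b(\bR^d)$, so the weak continuity of $t\mapsto\mu_t$ yields continuity of $t\mapsto g(t,r)$. Combining the two estimates shows that $g$ is jointly continuous, and multiplying by the continuous, nowhere-vanishing factor $\alpha|x|^{\alpha-1}x/|x|$ gives the continuity of $v$ on $[0,\infty)\times(\bR^d\setminus\{0\})$.

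\emph{Flow map, radial reduction and transport.} By radial symmetry $v(t,\cdot)$ is rotation-covariant and points toward the origin, so a characteristic issued from $x\ne0$ keeps the form $X_t(x)=R_t(|x|)\,x/|x|$ and the characteristic system reduces to the scalar equation
\[
\frac{d}{dt}R_t(r)=-\alpha R_t(r)^{\alpha-1}\,g\bigl(t,R_t(r)\bigr),\qquad R_0(r)=r .
\]
Its right-hand side is $\le0$, so $R_t(r)$ is non-increasing; since $\alpha>0$ the (possibly singular) inward field $\alpha R^{\alpha-1}g(t,R)\le\alpha R^{\alpha-1}$ is integrable at $R=0$, so the equation has a global solution that is absolutely continuous and non-negative and stays at $0$ once it reaches it (and $X_t(0)\equiv0$ since $v(t,0)=0$). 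That the given solution $\mu_t$ is actually transported by this flow, $\mu_t=X_t^\#\mu_0$, equivalently $\hat\mu_t=R_t^\#\hat\mu_0$, follows from the distributional identity $\partial_t\mu_t+\Div(\mu_tv_t)=0$ together with the superposition principle (or the direct characteristics argument of \cite{BLR}), using that $v_t$ is continuous off the origin. Substituting $\hat\mu_t=R_t^\#\hat\mu_0$ into \eqref{eq10.05} gives the asserted formula for $\nabla K*\mu_t$; and since $t\mapsto R_t(\rho)$ is non-increasing and $\psi$ is non-increasing, $\psi(R_t(\rho)/|x|)$ is non-decreasing in $t$ for each fixed $\rho$ and $x$, so
\[
|(\nabla K*\mu_t)(x)|=\alpha|x|^{\alpha-1}\int_0^\infty\psi\bigl(R_t(\rho)/|x|\bigr)\,d\hat\mu_0(\rho)
\]
is non-decreasing in $t$.

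\emph{Main obstacle.} The one genuinely delicate point is the behavior at the origin. Here, in contrast with the case $K(x)=|x|$ treated in \cite{BLR}, the velocity carries the extra homogeneity factor $|x|^{\alpha-1}$, which blows up at $0$ when $\alpha<1$, and the kernel $\psi(\rho/|x|)$ does not vanish at $\rho=0$; thus $v$ is not Lipschitz, indeed not even bounded, near $0$, classical Cauchy--Lipschitz theory does not apply, and a Dirac mass may form at the origin. The work is to check that the strictly inward, monotone structure of $v$ makes the push-forward and the reduction to the scalar ODE above legitimate --- characteristics do not cross and trajectories only move toward $0$ --- i.e.\ to apply the superposition principle carefully on the absorbing set $\{x=0\}$, which carries positive $\mu_t$-mass for $t>0$. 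Every other estimate is routine and identical in form to \cite[Sect.~4]{BLR}, with $|x|$ there replaced by $|x|^\alpha$.
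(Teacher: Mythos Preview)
Your approach is exactly what the paper does: it gives no detailed proof of this lemma, stating only that it ``can be deduced from Lemma \ref{lem3.3} in the same way as Lemma \ref{lem2.4} is deduced from [Lemma \ref{lem2.3}]'' and referring to \cite[Sect.~4]{BLR}. You have in fact supplied more detail than the paper, correctly flagging the new wrinkle (the factor $|x|^{\alpha-1}$, unbounded near the origin when $\alpha<1$) and noting that integrability of $R^{\alpha-1}$ at $R=0$ for $\alpha>0$ keeps the scalar ODE well-behaved.
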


Next we prove a point-wise lower bound of the velocity $v$ at $t=0$.

\begin{lemma}
                                \label{lem3.5}
Let $K(x)=|x|^\alpha,\alpha\in (0,2)$ and $u_0$ be the initial data defined in \eqref{eq22.16} with $\epsilon\in (0,1)$:
$$
u_0(x)=\frac L {|x|^{d+\alpha-2+\epsilon}}1_{|x|\le 1/2}.
$$
Then there exists a constant $\delta_1>0$ such that
\begin{equation}
                                    \label{eq22.22b}
|(\nabla K*u_0)(x)|\ge \delta_1 |x|^{1-\epsilon}
\end{equation}
for any $x\in \bR^d$ satisfying $0<|x|<1/2$.
\end{lemma}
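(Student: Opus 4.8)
\emph{Proof proposal.} The plan is to run the argument of Lemma \ref{lem2.5} with the exponent $2-\alpha$ in place of $1$, now using the representation \eqref{eq10.05} and the properties of $\psi$ established in Lemma \ref{lem3.3}. Since $u_0$ is radially symmetric, one first records that
$$
d\hat u_0(\rho)=\omega_d\rho^{d-1}u_0(\rho e_1)\,d\rho=L\omega_d\rho^{1-\alpha-\epsilon}1_{\rho\le 1/2}\,d\rho .
$$
The field $|\nabla K*u_0|$ is continuous and strictly positive on $\bR^d\setminus\{0\}$: positivity because in \eqref{eq10.05} we have $\psi>0$ by Lemma \ref{lem3.3}, $\alpha|x|^{\alpha-1}>0$, and $\hat u_0$ has unit mass; continuity by \eqref{eq10.05}, the continuity of $\psi$, and dominated convergence. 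Hence it suffices to prove \eqref{eq22.22b} for $0<|x|<1/4$; on the compact annulus $1/4\le|x|\le 1/2$ the bound then follows after enlarging the constant.

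Next I would extract a uniform lower bound for $\psi$ far out. By Lemma \ref{lem3.3}, the function $s\mapsto\psi(s)s^{2-\alpha}$ is continuous and positive on $[1,\infty)$ and converges to $(d+\alpha-2)/d>0$ as $s\to\infty$, so $\delta_0:=\inf_{s\ge 1}\psi(s)s^{2-\alpha}>0$. Thus, for $0<|x|<1/4$ and $\rho\in(|x|,1/2)$, putting $s=\rho/|x|\ge 1$,
$$
\psi(\rho/|x|)\ge\delta_0(|x|/\rho)^{2-\alpha}.
$$

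Finally I would insert this into \eqref{eq10.05} and integrate, using that $\hat u_0$ is supported in $[0,1/2]$:
\begin{align*}
|(\nabla K*u_0)(x)|
&=\alpha|x|^{\alpha-1}\int_0^\infty\psi(\rho/|x|)\,d\hat u_0(\rho)
\ge\alpha L\omega_d|x|^{\alpha-1}\int_{|x|}^{1/2}\psi(\rho/|x|)\rho^{1-\alpha-\epsilon}\,d\rho\\
&\ge\alpha\delta_0 L\omega_d|x|^{\alpha-1}|x|^{2-\alpha}\int_{|x|}^{1/2}\rho^{\alpha-2}\rho^{1-\alpha-\epsilon}\,d\rho
=\alpha\delta_0 L\omega_d|x|\int_{|x|}^{1/2}\rho^{-1-\epsilon}\,d\rho\\
&=\frac{\alpha\delta_0 L\omega_d}{\epsilon}\,|x|\big(|x|^{-\epsilon}-2^{\epsilon}\big).
\end{align*}
For $0<|x|<1/4$ one has $(2|x|)^\epsilon\le 2^{-\epsilon}<1$, so $|x|^{-\epsilon}-2^{\epsilon}=|x|^{-\epsilon}\big(1-(2|x|)^\epsilon\big)\ge(1-2^{-\epsilon})|x|^{-\epsilon}$, and \eqref{eq22.22b} follows with $\delta_1=\alpha\delta_0 L\omega_d(1-2^{-\epsilon})/\epsilon>0$. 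There is no real obstacle: the estimate is essentially a copy of Lemma \ref{lem2.5}. The only point requiring a moment's thought is the extraction of $\delta_0>0$ in the middle step, where the hypothesis $\alpha<2$ enters essentially (it forces $\psi$ to decay at the precise rate $\rho^{-(2-\alpha)}$ and no faster, which is exactly what makes the $\rho$-integral of $\rho^{\alpha-2}\cdot\rho^{1-\alpha-\epsilon}=\rho^{-1-\epsilon}$ blow up like $|x|^{-\epsilon}$ and reproduce the critical power $|x|^{1-\epsilon}$).
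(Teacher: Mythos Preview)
Your proof is correct and follows essentially the same route as the paper's: reduce to $0<|x|<1/4$ by positivity and continuity, use Lemma~\ref{lem3.3} to get $\psi(\rho/|x|)\ge\delta_0(|x|/\rho)^{2-\alpha}$ on $\rho\in(|x|,1/2)$, and integrate $\rho^{-1-\epsilon}$. The only differences are cosmetic---you spell out why $\delta_0>0$ and give an explicit constant in the final step, which the paper leaves implicit.
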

\begin{proof}
Clearly, we have $\hat u_0(\rho)=L\omega_d \rho^{1-\alpha-\epsilon}1_{\rho<1/2}$. By the positivity and continuity  of $v(0,\cdot)$ in $\bR^d\setminus \{0\}$, it suffices to prove \eqref{eq22.22b} for $x\in \bR^d$ satisfying $0<|x|<1/4$. It follows from Lemma \ref{lem3.3} that for any $\rho\in (|x|,1/2)$,
\begin{equation}
                                \label{eq13.20}
\psi(\rho/|x|)\ge \delta_0 (|x|/\rho)^{2-\alpha}
\end{equation}
for a constant $\delta_0>0$ independent of $|x|$.
Thus, for any $x\in \bR^d$ satisfying $0<|x|<1/4$,
\begin{align*}
|(\nabla K*u_0)(x)|&\ge \alpha|x|^{\alpha-1}\int_{|x|}^{1/2}\psi(\rho/|x|)\hat u_0(\rho)\,d\rho\\
&\ge \alpha\delta_0L\omega_d|x|\int_{|x|}^{1/2}\rho^{-1-\epsilon} d\rho\\
&=\frac {\alpha\delta_0L\omega_d} {\epsilon}|x|\left(|x|^{-\epsilon}-(1/2)^{-\epsilon}\right)\\
&\ge \delta_1 |x|^{1-\epsilon},
\end{align*}
since $0<|x|<1/4$. The lower bound \eqref{eq22.22b} is proved.
\end{proof}

Now we are ready to prove Theorem \ref{thm2}.

\begin{proof}[Proof of Theorem \ref{thm2}]
With Lemma \ref{lem3.5}, the proof of Theorem \ref{thm2} is essentially the same as that of Theorem 4.2 \cite{BCL}. We present it here for completeness. By the definition of the push forward, for any $t>0$, we have
$$
\mu_t(\{0\})=(X_t^\#\mu_0)(\{0\})=\mu_0(X_t^{-1}(\{0\}).
$$
Note the solution of the ordinary differential equation $r'=-\delta_1 r^{1-\epsilon}$ with initial data $r_0>0$ reaches zero at a finite time $t=r_0^\epsilon/(\epsilon\delta_1)$. By Lemmas \ref{lem3.4} and \ref{lem3.5}, for any $t>0$, we can find $\delta>0$ such that $X_t(B_\delta)=\{0\}$. In other words, $B_\delta\subset X_t^{-1}(\{0\})$. Since $\mu_0(B_\delta)>0$, we conclude $\mu_t(\{0\})>0$ for any $t>0$. The theorem is proved.
\end{proof}

\subsection{Proof of Theorem \ref{thm3}}

The following lemma is an analogy of Lemma \ref{lem2.5}.

\begin{lemma}
                                \label{lem4.5}
Let $K(x)=|x|$ and $u_0$ be the initial data defined in \eqref{eq22.17} with $k\in (1/p_s,1)$:
$$
u_0(x)=\frac L {|x|^{d+\alpha-2}(-\log |x|)^{k}}1_{|x|\le 1/2}.
$$
Then there exists a constant $\delta_1>0$ such that
\begin{equation}
                                    \label{eq22.22c}
|(\nabla K*u_0)(x)|\ge \delta_1 |x|(-\log |x|)^{1-k}
\end{equation}
for any $x\in \bR^d$ satisfying $0<|x|<1/2$.
\end{lemma}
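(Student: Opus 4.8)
The plan is to reproduce, with the logarithmic weight inserted, the argument behind Lemma~\ref{lem2.5} and Lemma~\ref{lem3.5}, now invoking the representation and asymptotics of Lemma~\ref{lem3.3} (throughout this subsection $K(x)=|x|^\alpha$ with $\alpha\in(0,2)$). First I would record the radial distribution of the initial data \eqref{eq22.17}: since $u_0$ is radially symmetric with $u_0(x)=L|x|^{-(d+\alpha-2)}(-\log|x|)^{-k}1_{|x|\le1/2}$,
$$
\hat u_0(\rho)=L\omega_d\,\rho^{1-\alpha}(-\log\rho)^{-k}1_{\rho\le1/2}.
$$
By Lemma~\ref{lem3.4} the vector field $v(0,\cdot)=-(\nabla K*u_0)1_{x\neq0}$ is continuous and nonvanishing on $\bR^d\setminus\{0\}$, so it suffices to establish \eqref{eq22.22c} on the smaller annulus $0<|x|<1/4$; on the compact set $1/4\le|x|\le1/2$ the quantity $|x|(-\log|x|)^{1-k}$ is bounded above while $|v(0,\cdot)|$ has a strictly positive minimum, so \eqref{eq22.22c} holds there after shrinking $\delta_1$.

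Next I would extract a pointwise lower bound for the kernel $\psi$ from Lemma~\ref{lem3.3}. Since $\psi$ is continuous and strictly positive on $[0,\infty)$ and $\psi(\rho)\rho^{2-\alpha}\to(d+\alpha-2)/d>0$ as $\rho\to\infty$, the function $\rho\mapsto\psi(\rho)\rho^{2-\alpha}$ has a strictly positive infimum $\delta_0$ over $[1,\infty)$; equivalently $\psi(s)\ge\delta_0\,s^{\alpha-2}$ for $s\ge1$, that is, $\psi(\rho/|x|)\ge\delta_0(|x|/\rho)^{2-\alpha}$ whenever $\rho\ge|x|$ (this is exactly \eqref{eq13.20}). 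Discarding the nonnegative contribution of $\rho\le|x|$ in the representation \eqref{eq10.05} and using the algebraic identities $|x|^{\alpha-1}|x|^{2-\alpha}=|x|$ and $\rho^{-(2-\alpha)}\rho^{1-\alpha}=\rho^{-1}$, this yields, for $0<|x|<1/4$,
$$
|(\nabla K*u_0)(x)|\ge\alpha|x|^{\alpha-1}\int_{|x|}^{1/2}\psi(\rho/|x|)\,\hat u_0(\rho)\,d\rho
\ge\alpha\delta_0L\omega_d\,|x|\int_{|x|}^{1/2}(-\log\rho)^{-k}\,\frac{d\rho}{\rho}.
$$

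Finally I would evaluate $\int_{|x|}^{1/2}(-\log\rho)^{-k}\rho^{-1}\,d\rho=\frac{1}{1-k}\big((-\log|x|)^{1-k}-(\log2)^{1-k}\big)$, where $k<1$ is used, and absorb the lower-order term using $0<|x|<1/4$: then $-\log|x|>2\log2$, so $(\log2)^{1-k}\le2^{-(1-k)}(-\log|x|)^{1-k}$ and the bracket is at least $(1-2^{-(1-k)})(-\log|x|)^{1-k}$. Collecting constants gives $|(\nabla K*u_0)(x)|\ge\delta_1|x|(-\log|x|)^{1-k}$ on $0<|x|<1/4$, which together with the reduction in the first paragraph proves \eqref{eq22.22c}. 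I do not anticipate a genuine obstacle here: this is simply the logarithmically weighted counterpart of Lemma~\ref{lem3.5}, and the only point requiring care is the uniform bound $\psi(s)\ge\delta_0s^{\alpha-2}$ on $[1,\infty)$, which is precisely what the asymptotics in Lemma~\ref{lem3.3} provide. (The hypothesis $k>1/p_s$ plays no role in this lemma — it is needed only to keep $u_0\in L_{p_s}$ for Theorem~\ref{thm3}; here only $k<1$ is used.)
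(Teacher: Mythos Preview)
Your proof is correct and follows essentially the same route as the paper: pass to the radial density $\hat u_0$, reduce to $0<|x|<1/4$ by positivity and continuity of $v(0,\cdot)$, invoke the bound $\psi(\rho/|x|)\ge\delta_0(|x|/\rho)^{2-\alpha}$ from Lemma~\ref{lem3.3}, and integrate $(-\log\rho)^{-k}/\rho$ explicitly. Your version is slightly more explicit in justifying the reduction to $|x|<1/4$ and in absorbing the $(\log 2)^{1-k}$ term, and your closing remark that only $k<1$ is used here is accurate.
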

\begin{proof}
Clearly, we have
$$
\hat u_0(\rho)=L\omega_d\rho^{1-\alpha}(-\log \rho)^{-k}1_{\rho\le 1/2}.
$$
As before, it suffices to prove \eqref{eq22.22c} for $x\in \bR^d$ satisfying $0<|x|<1/4$. It follows from Lemma \ref{lem3.3} that, for any $\rho\in (|x|,1/2)$, \eqref{eq13.20} holds for a constant $\delta_0>0$ independent of $|x|$.
Thus, for any $x\in \bR^d$ satisfying $0<|x|<1/4$,
\begin{align*}
|(\nabla K*u_0)(x)|&\ge \alpha|x|^{\alpha-1}\int_{|x|}^{1/2}\psi(\rho/|x|)\hat u_0(\rho)\,d\rho\\
&\ge \alpha\delta_0L\omega_d|x|\int_{|x|}^{1/2}(-\log \rho)^{-k}\,\frac {d\rho}{\rho}\\
&=\alpha\frac {\delta_0L\omega_d} {1-k}|x|\left((-\log |x|)^{1-k}-(\log 2)^{1-k}\right)\\
&\ge \delta_1 |x|(-\log |x|)^{1-k},
\end{align*}
since $0<|x|<1/4$. The lemma is proved.
\end{proof}

\begin{proof}[Proof of Theorem \ref{thm3}]
We follow the proof of Theorem \ref{thm1} with some modifications. Thanks to Lemmas \ref{lem4.5} and \ref{lem3.4}, \eqref{eq23.27} remains true for any $r\in (0,1/2)$. Recall that $p_s=d/(d+\alpha-2)$. Now we suppose that, for some $t>0$, $\mu_t$ has a density function $u(t,x)\in L_{p_s}(\bR^d)$. By H\"older's inequality, for any $r\in (0,1/2)$
\begin{align}
\mu_t(B_{R_t(r)})&=\int_{B_{R_t(r)}}u(t,x)\,dx\nonumber\\
&\le \left(\int_{B_{R_t(r)}}u^{p_s}(t,x)\,dx\right)^{\frac {1} {p_s} }|B_{R_t(r)}|^{1-\frac {1} {p_s}  }\nonumber\\
&\le \|u(t,\cdot)\|_{L_{p_s}(\bR^d)}(R_t(r))^{2-\alpha}.
                                    \label{eq23.42c}
\end{align}
On the other hand, by the definitions of $\mu_t$ and $R_t$,
\begin{equation}
                                \label{eq23.49c}
\mu_t(B_{R_t(r)})\ge \mu_0(B_r)=L\omega_d\int_0^r \rho^{1-\alpha}(-\log\rho)^{-k}\,d\rho.
\end{equation}
We combine \eqref{eq23.42c}, \eqref{eq23.49c} and \eqref{eq23.27} to get
\begin{align}
\|u(t,\cdot)\|_{L_{p_s}(\bR^d)}&\ge \frac {\mu_t(B_{R_t(r)})} {(R_t(r))^{2-\alpha}}\nonumber\\
&\ge L\omega_d\frac {\int_0^r \rho^{1-\alpha}(-\log\rho)^{-k}\,d\rho}{r^{2-\alpha} e^{-(2-\alpha)\delta_2 t (-\log r)^{1-k}}}.
                                    \label{eq00.57c}
\end{align}
However, by L'Hospital's rule,
\begin{align*}
&\lim_{r\searrow 0}\frac {\int_0^r \rho^{1-\alpha}(-\log\rho)^{-k}\,d\rho}{r^{2-\alpha} e^{-(2-\alpha)\delta_2 t (-\log r)^{1-k}}}\\
&\,=\lim_{r\searrow 0}\frac {r^{1-\alpha}(-\log r)^{-k}}
{e^{-(2-\alpha)\delta_2 t (-\log r)^{1-k}}r^{1-\alpha}(2-\alpha)\big(1+\delta_2 t(1-k)(-\log r)^{-k}\big)}\\
&\,=\infty.
\end{align*}
This gives a contradiction to \eqref{eq00.57c} since we assume $u(t,\cdot)\in L_{p_s}(\bR^d)$. The theorem is proved.
\end{proof}



\end{document}